\documentclass[12pt]{amsart}
\usepackage{amsfonts,amssymb, amscd, latexsym, graphicx, psfrag, color, float}
\usepackage[all]{xy}

\usepackage{todonotes}
\usepackage{hyperref}
\usepackage{mathrsfs}
\usepackage{mathtools}

\newtheorem{theorem}{Theorem}[section]
\newtheorem*{theoremnn}{Theorem}
\newtheorem{proposition}[theorem]{Proposition}
\newtheorem{proposition-definition}[theorem]
{Proposition-Definition}
\newtheorem{corollary}[theorem]{Corollary}
\newtheorem{lemma}[theorem]{Lemma}

\theoremstyle{definition}
\newtheorem*{notationnn}{Notation}
\newtheorem{definition}[theorem]{Definition}
\newtheorem{notation}[theorem]{Notation}
\newtheorem{example}[theorem]{Example}

\newtheorem{remark}[theorem]{Remark}

\theoremstyle{remark}

\newcommand{\bA}{\mathbb{A}}
\newcommand{\bC}{\mathbb{C}}

\newcommand{\bN}{\mathbb{N}}

\newcommand{\bQ}{\mathbb{Q}}
\newcommand{\bR}{\mathbb{R}}
\newcommand{\bZ}{\mathbb{Z}}

\newcommand{\cO}{\mathcal{O}}

\newcommand{\cT}{\mathcal{T}}
\newcommand{\cU}{\mathcal{U}}

\newcommand{\cX}{\mathcal{X}}
\newcommand{\cY}{\mathcal{Y}}
\newcommand{\cZ}{\mathcal{Z}}

\newcommand{\St}{\mathrm{St}}

\newcommand{\Spec}{\mathrm{Spec}\,}

\newcommand{\Hom}{\mathrm{Hom}}

\newcommand{\Div}{\mathrm{Div}}

\renewcommand{\log}{\mathrm{log}}

\newcommand{\gp}{\mathrm{gp}}
\newcommand{\an}{\mathrm{an}}

\newcommand{\et}{\mathrm{\acute{e}t}}

\newcommand{\chara}{{\rm char}\,}

\newcommand{\ket}{\mathrm{k\acute et}}
\newcommand{\fket}{\mathrm{fk\acute et}}
\newcommand{\fet}{\mathrm{f\acute et}}

\begin{document}

\title{The log homotopy exact sequence}
\author{Mattia Talpo}
\address{Mattia Talpo\\ Department of Mathematics\\
University of Pisa\\
Largo Bruno Pontecorvo 5\\
56127 Pisa PI\\
Italy}
\email{mattia.talpo@unipi.it}

  \keywords{Log geometry, Kummer \'etale fundamental group, homotopy exact sequence.}
  \subjclass[2020]{14A21, 14D06, 14F35}

\maketitle

\begin{abstract}
We show exactness of the homotopy sequence for the logarithmic fundamental group in the case of log smooth, finitely presented, proper and saturated morphisms of fs log schemes over a field. This generalizes earlier results of Hoshi \cite{hoshi} in the log regular case. In passing, we also construct a ``log Stein factorization'' in some particular cases. \end{abstract}

\setcounter{tocdepth}{1}

\tableofcontents

\section{Introduction}

This paper is about an exact sequence of fundamental groups in the context of certain log smooth morphisms of log schemes.

Recall that in topology, a fibration of path connected pointed topological spaces $(X,x)\to (Y,y)$ induces a long exact sequence of homotopy groups
$$
\xymatrix{
\cdots \ar[r] & \pi_{i+1}(Y,y)\ar[r] & \pi_i(X_y,x)\ar[r] & \pi_i(X,x)\ar[r] & \pi_i(Y,y)\ar[r]& \pi_{i-1}(X_y,x)\ar[r] & \cdots
}
$$
relating the homotopy groups of $X$, of $Y$, and of the fiber $X_y$ over $y$. There is an analogue of this in algebraic geometry for certain kinds of fibrations, involving the \emph{\'etale homotopy groups} $\pi_i^\et(X,x)$ of a connected scheme $X$ equipped with a geometric point $x$, that are defined in terms of the \emph{\'etale homotopy type} \cite{ArtinMazur, Friedlander1, Friedlander2}. A special and simpler case of this is given by an exact sequence for the \'etale fundamental groups, that was proven by Grothendieck \cite[Corollaire X.1.4]{SGA1}, and that holds for a proper, separable morphism of connected locally noetherian schemes with geometrically connected fibers.

Logarithmic geometry is a variant of algebraic geometry, where schemes are equipped with a structure sheaf of monoids with a multiplicative monoid homomorphism to the structure sheaf. The theory was first developed by Kato, Fontaine, Illusie, Deligne, Faltings, and then many others after them. Although it was initially engineered to be applied to some questions in $p$-adic Hodge theory, in time it became very relevant for the study of moduli spaces and enumerative geometry. In log geometry one has appropriate variants of the classical concepts of algebraic geometry, and this includes a notion of \'etale fundamental groups, which encode so-called \emph{Kummer \'etale covers} of a log scheme, the appropriate notion of finite covering spaces in this context. One can also define higher homotopy groups via an analogue of the \'etale homotopy type of a scheme (see for example \cite{KNIRS, proflog}). It is natural to expect exact sequences for these higher homotopy groups for certain morphisms, in presence at least of log smoothness and properness, in analogy with the topological and algebro-geometric cases. This is further hinted at by the log version of Ehresmann's lemma, i.e. the fact that over $\bC$, for a log smooth, proper and exact morphisms $X\to Y$, the induced map between the Kato-Nakayama spaces (a version of the analytification functor for log schemes) $X_\log\to Y_\log$ is a fiber bundle.

The problem of exactness of the sequence of log fundamental groups was considered by Hoshi in \cite{hoshi}, where he proved it assuming that the base log scheme is log regular. This is of course a quite special case, insufficient for applications to moduli problems where one wants to allow arbitrary fs base log schemes, for example non-trivial log points. Similar questions (but using a different notion of fundamental group) are considered also in \cite{dipr}.

We now state the main theorem of this paper. Let $k$ be a field.

\begin{theoremnn}[Theorems \ref{theorem:main} and \ref{thm:left.exact}]
Let $f\colon X\to S$ be a proper, finitely presented, log smooth and saturated morphism of fs log schemes over $k$, with connected log geometric fibers. Let $x\to X$ be a log geometric point of $X$, and $s\to S$ be its image in $S$. Then there is an induced exact sequence of log fundamental groups
$$
\xymatrix{
\pi_1^\ket(X_{{s}},{x})\ar[r] & \pi_1^\ket(X,{x})\ar[r] & \pi_1^\ket(S,{s})\ar[r] & 1.
}
$$
Moreover, if $S$ is a log point, we can remove the log smoothness and saturatedness assumptions and the sequence is also exact on the left.
\end{theoremnn}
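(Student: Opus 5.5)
The plan is to follow Grothendieck's strategy from SGA1, Exp.~X, translated into the Galois category of Kummer étale covers. Exactness of such a sequence of profinite groups is equivalent to three statements about Kummer étale covers.

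\emph{(a) Surjectivity of $\pi_1^\ket(X,x)\to\pi_1^\ket(S,s)$.} For every connected Kummer étale cover $T\to S$, the fs pullback $X\times_S T$ must be connected.

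\emph{(b) Composite trivial.} Every cover of the form $X\times_S T$ restricts to a cover of $X_s$ that is trivial (split).

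\emph{(c) Exactness in the middle.} Let $Y\to X$ be a connected Kummer étale cover whose restriction to $X_s$ has a section through the given point. Then $Y$ must be dominated by, or in fact be, the pullback of a Kummer étale cover of $S$.

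Statement (b) is formal: $X_s\to X\to S$ factors through the log geometric point $s$, over which Kummer étale covers split. For (a), saturatedness of $f$ ensures that the fs fiber product $X\times_S T$ agrees with the fine fiber product on underlying schemes and is again proper, log smooth and saturated over $T$. Its log geometric fibers are those of $f$, hence connected, and connectedness of $T$ then gives connectedness of $X\times_S T$, by properness and openness arguments (log smooth and saturated implies flat, so the map is open).

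The heart of the argument is (c), and for it I would construct a ``log Stein factorization''. Given a Kummer étale cover $Y\to X$, consider the composite $g\colon Y\to S$, which is again proper, log smooth and saturated (Kummer étale maps are log étale, and saturatedness passes to the composite since Kummer maps are saturated-ish over fs bases). The candidate is $T=\underline{\mathrm{Spec}}_S(g_*\mathcal{O}_Y)$, with a log structure induced by pushing forward $M_Y$ in an appropriate sense. Equivalently, $T$ should represent the log geometric connected components of the fibers of $g$.

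I would aim to show three things about $T$:
\begin{itemize}
\item $T\to S$ is Kummer étale;
\item $Y\to T$ has connected log geometric fibers;
\item if $Y$ splits over $X_s$ (at the marked point), then $Y\to X\times_S T$ is an isomorphism.
\end{itemize}
For the last point, compare degrees on the fiber over $s$: $Y_s\to X_s$ is a disjoint union of copies of $X_s$, and $T_s$ records these copies. Then use that a map of finite Kummer étale covers of $X$ that is an isomorphism over the connected fiber, and hence near it, is an isomorphism.

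The technical input I would use is a log version of the Ehresmann/cohomology-and-base-change statement: the number of log geometric connected components of the fibers of $g$ is locally constant on $S$. I would try to prove this by reducing, via Kato charts and strict étale localization on $S$, to the case where $S$ has a global chart $P$. I would then pass to the Kummer root cover $S_n$ (with chart $\frac1n P$) to trivialize the Kummer part of $Y\to X$. After this base change $Y$ becomes a strict étale cover of $X\times_S S_n$, so classical Stein factorization for the underlying proper flat morphism applies and gives a finite étale $T_n\to S_n$. This $T_n$ descends to a Kummer étale $T\to S$ by the $\mu_n$-equivariance of the construction.

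The main obstacle is this step: making the descent along $S_n\to S$ rigorous and proving that the underlying morphism after base change still has geometrically reduced fibers. This is needed for the classical Stein factorization $g_*\mathcal{O}$ to be finite étale, and it is where log smoothness and saturatedness (hence reduced fibers, by Tsuji's theorem) are essential.

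For the final assertion, when $S$ is a log point, $\pi_1^\ket(S,s)$ is an extension of the Galois group of $k$ by $\prod_\ell \mathbb{Z}_\ell(1)^r$, and Kummer covers of $S$ are explicit. I would then argue directly without log smoothness. For injectivity on the left, I would show that every connected Kummer étale cover of $X_s$ is dominated by the restriction of one on $X$, noting that $X_s\to X$ is a limit of Kummer covers of the form $X\times_S S'$ for finite $S'\to S$. Injectivity then follows from a standard limit argument.
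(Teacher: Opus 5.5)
Much of your outline is the paper's: the reduction to three Galois-category conditions, the formal proof of (b), a log Stein factorization whose underlying scheme is $\underline{\Spec}_S g_*\cO_Y$ (this is indeed the coarse space of the paper's $\cT$), and a degree-one comparison of $Y\to X\times_S T$. Your proof of (a) is correct and more elementary than the paper's route through full faithfulness of $f^*$, which uses root stacks and surjectivity of $\pi_1$ of a tame stack onto that of its coarse space. Your argument is that along a saturated map the fs base change is the scheme base change, which is proper, flat and surjective with connected geometric fibres.

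The gap is in (c): you have not shown that $T\to S$ is Kummer étale, nor that its points over $s$ record the components of $Y_s$. Two of your claims there are false.

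First, $g\colon Y\to S$ is not saturated in general, because Kummer maps are not. Take $X=S=\bA^1$ with log structure at $0$, $f=\mathrm{id}$, and $Y\to X$ given by $u\mapsto u^m$. The fibre of $\underline{Y}\to\underline{S}$ over $0$ is $\Spec k[u]/(u^m)$. When $\overline{M}_S$ is not free, $\underline{Y}\to\underline{S}$ need not even be flat.

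Second, and more seriously, base change along $S_n\to S$ does not make $Y$ strict over $X\times_S S_n$. It only adjoins roots of sections of $\overline{M}_S$, whereas $Y\to X$ may ramify along the part of $\overline{M}_X$ not coming from $S$, such as horizontal divisors or the extra rank at nodes. For example, let $S=\Spec k$ with trivial log structure, $X=\mathbb{P}^1$ with log structure at $0$ and $\infty$, and $Y\to X$ the $m$-th power map. Then $S_n=S$ for every $n$, and $Y$ is never strict over $X$. In this example your conclusion survives only because the base is a field. In general the mechanism you invoke is gone, and you would have to prove directly that $\underline{Y\times_S S_n}\to\underline{S_n}$ is flat with geometrically reduced fibres. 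That is exactly the obstacle you flag, and you now have no tool to settle it.

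The missing idea is to extract roots on $X$ itself, not only on $S$. Under the correspondence between fk\'et covers and f\'et covers of root stacks, $Y$ becomes a strict f\'et cover $\cY\to\sqrt[n]{X}$. Meanwhile $\sqrt[n]{X}\to\sqrt[n]{S}$ is still proper and flat with geometrically reduced fibres, because $X\to S$ is log smooth and saturated. The paper then proceeds as follows.
\begin{enumerate}
\item It applies Stein factorization for tame stacks to $\cY\to\sqrt[n]{S}$, obtaining an f\'et $\cT\to\sqrt[n]{S}$.
\item Passing to coarse spaces with the pushed-forward log structure gives an fk\'et $T\to S$.
\item Points of $\cT$ correspond to connected components of the geometric fibres of $\cY\to\sqrt[n]{S}$.
\item The isomorphism $(\sqrt[n]{X})_s\cong\sqrt[n]{X_s}$ lets the section over $X_s$ single out a point of $\cT\times_{\sqrt[n]{S}}\sqrt[n]{X}$ with exactly one preimage in $\cY$.
\end{enumerate}
Your $\mu_n$-equivariant descent along $S_n\to S$ is a local version of working over $\sqrt[n]{S}=[S_n/\mu_n]$. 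Without also replacing $X$ by $\sqrt[n]{X}$, however, you never reach the strict situation.

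A smaller point on the degree comparison: you only have a section of $Y_s\to X_s$, not a splitting. Also, the fibre $X_s\times T_s$ of $X\times_S T$ over $s$ is disconnected when $\deg T>1$. The correct argument is that $Y\to X\times_S T$ has degree one over the single component $X_s\times\{z\}$, hence everywhere, since $X\times_S T$ is connected by (a).

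For the log point case you only sketch injectivity. Surjectivity and exactness in the middle cannot be taken from (a) and (c), which used saturatedness and flatness that are no longer assumed. The paper gets the whole sequence from the limit presentation you mention. For the explicit Kummer covers $S_n^L\to S$, each $X\times_S S_n^L$ is a connected Galois cover of $X$ with group $\mathrm{Aut}(S_n^L/S)$. Hence $1\to\pi_1^{\ket}(X\times_S S_n^L)\to\pi_1^{\ket}(X)\to\mathrm{Aut}(S_n^L/S)\to 1$ is exact. By topological invariance and a limit argument, $\pi_1^{\ket}(X_s)\cong\varprojlim\pi_1^{\ket}(X\times_S S_n^L)$. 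Taking the limit of these sequences gives left, middle and right exactness at once.
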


Here $X_s$ is the ``log gometric fiber'' of the morphism $f$ over the log geometric point $s\to S$. It is a saturated log scheme, but the log structure is not finitely generated. We point out that one can avoid the use of such non-finitely generated log structures by considering appropriate limits of fs log schemes (as for example in \cite{hoshi}), but we feel that doing so makes statements and reasoning more cumbersome, and using this log geometric fiber is more natural. Non-finitely generated log structures have come up quite naturally in several contexts recently (cfr. for example \cite{MolchoWise, beyondfs}). Note also that we have a smoothness assumption on the morphism, which is not present in Grothendieck's theorem. It is likely that this assumption can be relaxed, but it is important for our proof, and it is going to be satisfied for the applications that we envision at the moment. 

The proof of the theorem is going to be an adaptation to the logarithmic case of existing proofs for the case of schemes (as in \cite[\href{https://stacks.math.columbia.edu/tag/0BUM}{Tag 0BUM}]{stacks-project}). One of the ingredients (just as in Hoshi's paper) is the Stein factorization, that we construct in the logarithmic context under some assumptions (Proposition \ref{prop:stein.factorization}). In order to mimic the non-log proof, we make use of a correspondence between Kummer \'etale covers of a log scheme $X$ and classically finite \'etale covers of some root stack of $X$. This was mostly worked out by Olsson in \cite{Olsson}. His construction, complemented with some input from the theory of infinite root stacks \cite{irs}, gives an equivalence of categories between the aforementioned objects, that allows us to transfer the problems on the logarithmic side to questions on root stacks.

The results of this paper are going to be useful (and were in fact motivated by) an upcoming work with Martin Ulirsch and Dmitry Zakharov about moduli spaces of log and tropical curves with level structures.

\subsection*{Conventions}

We work over a base field $k$. All of our monoids will be commutative and with a neutral element. We denote by $\bQ$ the set of rational numbers with denominator prime to $\chara k$. If $P$ is a fine saturated monoid, $P_\bQ$ will denote the cone determined by $P$ in the vector space $P^\gp\otimes_\bZ\bQ$, which can also be thought of as the direct limit $\varinjlim_n \frac{1}{n}P$ where $n$ ranges along the natural numbers not divisible by $p$, and the order is given by divisibility.

All log schemes will be (integral and) saturated, but not always fine, see Section \ref{sec:log.schemes} for a brief discussion. 

If $f\colon X\to Y$ is a morphism of (log) schemes and $F$ is a sheaf on $Y$, if there is no risk of confusion we will sometimes denote by $F|_X$ the pullback (in the appropriate category) of $F$ to $X$ along $f$.

Sometimes we write ``$=$'' for a canonical isomorphism, which we trust that the reader is able to determine easily, in case it isn't specified in the text.

\subsection*{Acknowledgements}

The author warmly thanks Piotr Achinger, Matthieu Romagny, David Rydh, Angelo Vistoli for useful conversations on the topics of this paper. He is also a grateful member of INdAM.

\section{Preliminaries}

\subsection{Log schemes}\label{sec:log.schemes}

We will assume that the reader is familiar with the basics of log geometry, and we only recall some relevant definitions and results. We refer to \cite{ogus-book} for basic notions and results that we do not explicitly recall here. If $X$ is a log scheme, we will denote by $\underline{X}$ the underlying scheme, as is often customary.

The most widely used category of log schemes is without a doubt the one of fs, for \emph{fine and staturated}, log schemes. These are log schemes $X$ that \'etale locally admit charts $X\to \Spec(P\to A)$, with $P$ a fs monoid, i.e. integral finitely generated (this is the ``fine'' bit) and saturated. Here, if $A$ is commutative unital ring, $P$ is a monoid and $P\to A$ is a homomorphism, where $A$ is regarded as a monoid via multiplication, with the symbol $\Spec(P\to A)$ we denote the affine log scheme $\Spec A$, equipped with the log structure induced by the chart $P\to A$.

To work with log geometric fibers (see Section \ref{sec:log.g.fibers}) we need to sometimes enlarge this category by dropping the ``finitely generated'' assumption, and work with saturated, but not necessarily fs, log schemes. Our work is not the first instance where the ``finitely generated'' assumption is too restrictive. As examples we point out the work of Molcho--Wise on the log Picard group \cite{MolchoWise}, which make use of non-finitely generated valuative monoids, and the paper \cite{beyondfs}, which develops parts of the basic theory of log schemes (including the log fundamental group) in absence of finite generation. The fs assumption will still be often present in this paper, except when we will be dealing with log geometric fibers.

Note that by \cite[Lemma 3.1.2]{beyondfs}, $X$ is saturated if and only if it is quasi-coherent (i.e. it admits charts \'etale locally) and all geometric stalks of $M_X$ are saturated monoids.

\begin{example}
A typical example of a non-finitely generated saturated monoid that we care about would be $\bQ_+$, the monoid of non-negative rational numbers with respect to addition, or more generally the monoid of rational point $P_\bQ$ of the cone generated in $P^\gp\otimes_\bZ \bR$ by the fs monoid $P$. These are monoids that one encounters when considering log geometric points, and  consequently log geometric fibers of morphisms of fs log schemes.
\end{example}

We now recall the notions of integral and saturated morphisms, which will be important throughout the text.

\begin{definition}
A homomorphism $P\to Q$ of integral monoids is called \emph{integral} if the coproduct $Q\oplus_P P'$ along any morphism $P\to P'$ with $P'$ integral, is again an integral monoid. A morphism $f\colon X\to Y$ of integral log schemes is called integral if for every geometric point ${x}\to X$ the morphism $M_{Y,f({x})}\to M_{X,x}$ is integral.
\end{definition}

Integrality of a homomorphism of integral monoids $\phi\colon P\to Q$ is equivalent to flatness of the induced morphism $\Spec \bZ[Q]\to \Spec \bZ[P]$, and also to an ``equational'' definition, namely the fact that whenever we have an equality $\phi(p_1)+q_1=\phi(p_2)+q_2$ in $Q$, there exist $q'\in Q$ and $p_1', p_2'$ in $P$ such that $p_1+p_1'=p_2+p_2'$ and $q_i=\phi(p_i')+q'$, see \cite[Proposition I.4.6.7]{ogus-book}.

We recall in particular that a log smooth morphism which is integral is also flat (in the classical sense), see \cite[Theorem IV.4.3.5]{ogus-book}.

\begin{definition}
A homomorphism $ P\to Q$ of saturated monoids is called \emph{saturated} if the coproduct $Q\oplus_P P'$ along any morphism $P\to P'$ with $P'$ saturated, is again a saturated monoid. A morphism $f\colon X\to Y$ of integral log schemes is called saturated if for every geometric point ${x}\to X$ the morphism $M_{Y,f({x})}\to M_{X,x}$ is saturated.
\end{definition}  
 
For log smooth and integral morphisms of fs log schemes, saturatedness is equivalent to reducedness of fibers, see \cite[Theorem IV.4.3.6]{ogus-book}.

We also briefly recall how fibered products of log schemes are constructed (see \cite[Section III.2.1]{ogus-book}): given two morphisms $Y\to X$ and $Z\to X$,  one forms the fibered product of the underlying schemes $\underline{Y}\times_{\underline{X}}\underline{Z}$ and equips it with the pushout log structure coming from the obvious diagram. This gives the fibered product in log schemes, and if $X,Y,Z$ are coherent (i.e. locally admit charts from finitely generated monoids), then the fibered product is also coherent. It might well happen though that this fibered product is not integral (resp. saturated), even if $X,Y,Z$ are. This is true if one of the two morphisms is integral (resp. saturated).

If the naive fibered product is not fine (resp. saturated), one obtains the fibered product in the category of fine (resp. fs) log schemes by applying an ``integralization'' (resp. ``saturation'') functor, which is modelled on the integralization (resp. saturation) functors on monoids. If $P$ is any monoid, the integralization $P^{\rm int}$ is defined as the image of $P$ in the associated group $P^\gp$, and is the initial integral monoid with a morphism from $P$. The saturation $P^{\rm sat}$ of an integral monoid $P$ is given by the submonoid of $P^\gp$ of elements that have a positive multiple in $P$, and is the initial saturated monoid with a morphism from $P$. Both of these functors are left adjoint to the appropriate inclusion functors. The corresponding functors on log schemes are defined by pulling back this construction along charts for the log structure. For details, see \cite[Proposition III.2.1.5]{ogus-book}. The same constructions extend to log algebraic stacks with only cosmetic changes.

 This all works well with fine or fs log schemes or stacks. In the case of saturated log structures (without the finite generation assumption) one has to be more careful, and it is not clear whether fibered products always exist and behave well in full generality. See \cite[Lemma 3.1.9]{beyondfs} and the surrounding discussion.  
 
\begin{notationnn}
All fibered products of log schemes and stacks will tacitly be taken in the saturated category. In particular, whenever we write a fibered product of log schemes or stacks, we are implicitly asserting that it exists. In all cases, existence will follow from \cite[Lemma 3.1.9]{beyondfs}.
\end{notationnn}

To conclude this section we also recall the notion of exactness.

\begin{definition}
A homomorphism $P\to Q$ of monoids is \emph{exact} if the natural map $P\to Q\times_{Q^\gp} P^\gp$ is an isomorphism. A morphism $f\colon X\to Y$ of log schemes is exact if for every geometric point $x\to X$ the morphism $(f^{*}M_{Y})_x\to M_{X,x}$ is exact.
\end{definition}

If $P$ and $Q$ are integral, exactness simply means that $P$ is the preimage of $Q$ along the induced homomorphism $P^\gp\to Q^\gp$. Typical examples of non-exact morphisms are log blow-ups, for example the morphism $\bA^2\to \bA^2$ given by $(x,y)\mapsto (x,xy)$, which is a chart of the blow-up of the origin in $\bA^2$, and is a toric morphism induced by the monoid homomorphism $\bN^2\to \bN^2$ mapping $e_1$ to $e_1$ and $e_1$ to $e_1+e_2$.

Imposing exactness gives a log version of Ehresmann's theorem about locally trivial fibrations: if $f\colon X\to Y$ is a log smooth, proper and exact morphism of fs log schemes locally of finite type over $\bC$ (or log analytic spaces), then the induced morphism $X_\log\to Y_\log$ is a fiber bundle, i.e. locally a product over the base \cite[Theorem 0.3]{nakayama-ogus}. Here $X_\log$ is the \emph{Kato--Nakayama space} of $X$, first introduced in \cite{KN}, a topological space obtained from the analytification $X_\an$ by replacing points with non-trivial log structure with real tori.

We also point out that by \cite[Proposition III.2.5.2]{ogus-book}, a saturated morphism is exact.

\subsection{Root stacks}

We briefly recall the construction of (finite and infinite) root stacks of a log scheme. Our references for this are \cite{borne-vistoli} and \cite{irs}.

Let $X$ be a log scheme, equipped with a system of denominators $\overline{M}_X\to N$. This is an injective homomorphism of (\'etale) sheaves of monoids which is \emph{of Kummer type} on all stalks at geometric points (i.e. for every geometric point $x$ of $X$, every element of $N_x$ has a positive multiple coming from $\overline{M}_{X,x}$), and $N$ is coherent, i.e. it admits local charts from finitely generated monoids.

Recall that the log structure $\alpha\colon M_X\to \cO_X$ of $X$ can also be seen as a \emph{Deligne--Faltings structure} (DF structure for short), which is a symmetric monoidal functor $L\colon \overline{M}_X\to \Div_X$, where $\Div_X$ is the stack on the small \'etale site of $X$ consisting of line bundles together with a global section (sometimes these are called ``generalized Cartier divisors''). The DF structure is obtained from the log structure by taking the stacky quotient for the action of the units $\cO_X^\times$.

\begin{definition}
The \emph{root stack} $\sqrt[N]{X}$ of $X$ with respect to $\overline{M}_X\to N$ is the stack on schemes over $X$ whose $(S\to X)$-points are extensions
$$
\xymatrix{
\overline{M}_X|_S\ar[d] \ar[r] & \Div_S\\
N|_S\ar@{-->}[ru] &
}
$$
of the pullback DF structure $\overline{M}_X|_S\to \Div_S$ of $X$ to $S$ to the sheaf $N|_S$. An important special case is given by $N=\frac{1}{n}\overline{M}_X$ for some natural number $n$. In that case the root stack is simply denoted by $\sqrt[n]{X}$.
\end{definition}

The root stacks of a fine log scheme are tame algebraic stacks with finite diagonalizable stabilizers and coarse moduli space the original $X$, and they are also Deligne--Mumford when $\chara k$ does not divide the order of the cokernel of the morphism $\overline{M}_{X,x}^\gp\to N_x^\gp$, for any geometric point $x$ of $X$ (in particular this happens for $\sqrt[n]{X}$ if $\chara k\nmid n$), see \cite[Proposition 4.19]{borne-vistoli}.

\begin{remark}\label{remark:valued.in.sets}
A root stack $\sqrt[N]{X}$ of a log scheme $X$ is naturally equipped with a tautological log structure, with characteristic sheaf given by the pullback of the sheaf $N$ from $X$.

If $X$ is a log scheme and $\overline{M}_X\to N$ is a Kummer extension, the corresponding root stack $\sqrt[N]{X}$, seen as a log stack in this way, gives a groupoid-valued functor $F$ on the category of log schemes, by setting $F(T)=\Hom(T,\sqrt[N]{X})$ (where $\Hom$ denotes morphisms of log stacks). This functor is actually valued in sets.

The reason is that the presence of the morphism $ N|_T\to \overline{M}_T$ between the log structures of $T$ and $\sqrt[N]{X}$ and of the natural transformation witnessing commutativity of the diagram
$$
\xymatrix{
N|_T\ar[r]\ar[d] & \Div_T\\
\overline{M}_T \ar[ru]
}
$$
kills the automorphisms that the DF structure $N|_T\to \Div_T$ might have (and that give the stacky structure over schemes), since these automorphisms would have to be compatible with the diagram above.
\end{remark}

Root stacks form an inverse system with respect to the natural order relation on systems of denominators on $X$, given by inclusion. The inverse limit, that can be identified with the a priori smaller $\varprojlim_n \sqrt[n]{X}$, is called the \emph{infinite root stack} and denoted by $\sqrt[\infty]{X}$. This is not an algebraic stack anymore, but it still has an fpqc atlas and presentations as a quotient stack \'etale locally on the coarse moduli space $X$ \cite[Corollary 3.13]{irs}.

On top of this, the formation of root stacks is also functorial with respect to morphisms of log schemes equipped with systems of denominators: if $X$ and $Y$ are log schemes with systems of denominators $\overline{M}_X\to N$ and $\overline{M}_Y\to N'$, and we are given a morphism of log schemes  $X\to Y$ together with a homomorphism $N'|_X\to N$ of sheaves of monoids compatible with $\overline{M}_Y|_X\to \overline{M}_X$, then there is an induced morphism of root stacks $\sqrt[N]{X}\to \sqrt[N']{Y}$, defined by restriction along the given $N'|_X\to N$.

\begin{remark}
We will make use of root stacks to turn Kummer \'etale covers of a log scheme into strict \'etale covers of its root stacks (Section \ref{sec:correspondence}). Because of a condition on the homomorphisms of monoids corresponding to Kummer \'etale maps (that we recall in the next section), it suffices to consider the inverse limit of the root stacks only over the natural numbers not divisible by $\chara k$, which will be pro-Deligne--Mumford (an inverse limit of Deligne--Mumford stacks) instead of merely pro-algebraic. We will denote the corresponding ``prime-to-$\chara k$'' infinite root stack $\varprojlim_{\chara k \nmid n}\sqrt[n]{X}$ by $\sqrt[\infty]{X}'$. In characteristic zero this will of course be the full infinite root stack.
\end{remark}

We will at some point have to consider root stacks of log geometric fibers, which are going to be saturated log schemes, but not fs. The functorial definition of root stacks makes perfect sense in that case as well, and it is everything that we will need to carry out our arguments.

\subsection{The logarithmic fundamental group}

We now recall the definition and some facts about the logarithmic (or Kummer \'etale) fundamental group. We refer the reader to \cite[Section 4]{illusie-survey} for a more detailed account.

The log fundamental group is defined in the same spirit as the usual \'etale fundamental group of a connected scheme, as the automorphisms of the fiber functor with respect to a log geometric point, on a certain category of finite covers. It is a profinite group, inverse limit of automorphism groups of the Galois objects of the category.

We start by recalling the notion of log geometric point.

\begin{definition}
A \emph{log geometric point} of $X$ is a morphism $x\to X$, where $x=\Spec(\Omega \to K)$ is a saturated log scheme, with underlying scheme $\Spec K$ for a separably closed field $K$, and for which the monoid $\Omega$ is \emph{uniquely divisible}, i.e. it has the property that $\cdot n\colon \Omega\to \Omega$ is bijective for every $n$ invertible in $K$. The map $\Omega\to K$ sends $0$ to $1$, and every $c\in \Omega\setminus\{0\}$ to $0\in K$, as usual for log points.
\end{definition}

\begin{remark}
Log geometric points can be identified with points (in the sense of topos theory) of the log \'etale topos, in analogy with the classical case of the \'etale site \cite[Proposition 4.4]{illusie-survey}.
\end{remark}

A log geometric point $x\to X$ has an underlying geometric point of $\underline{X}$, that we denote by $\underline{x}\to \underline{X}$.

Going in the other direction, if $x=\Spec K \to \underline{X}$ is a geometric point of the underlying scheme one can construct a  log geometric lying point lying over it in the following way: given an isomorphism $M_x\cong K^\times\oplus P$, equip $x$ with the log structure given by the monoid $K^\times \oplus P_\bQ$, and homomorphism $K^\times\oplus P_\bQ\to K$ sending $P_\bQ$ to zero. This gives a log geometric point $x'$ over $x$ (of course by means of the natural map $P\to P_\bQ$), and a choosing a different isomorphism produces a canonically isomorphic log geometric point.

Moreover, the morphism $x'\to X$ has the property that the map $\overline{M}_x\to \overline{M}_{x'}$ induces an isomorphism $(\overline{M}_x)_\bQ\cong \overline{M}_{x'}$. 

\begin{definition}
A log geometric point $x=\Spec(\Omega \to K)\to X$  is \emph{strict} if the map $\overline{M}_x\to \Omega$ induces an isomorphism $(\overline{M}_x)_\bQ\cong \Omega$.
\end{definition} 

Notice that this doesn't mean that the morphism of log schemes is strict in the usual sense, and this differs from the use of the term in \cite{hoshi}. Over every geometric point of $X$ there is a unique (up to isomorphism) strict log geometric point, given by the construction above. There are of course plenty of non-strict log geometric points, obtained for example by enlarging the monoid $(\overline{M}_x)_\bQ$ further.

The finite covers in the logarithmic context are the \emph{Kummer \'etale covers} of a fs log scheme $X$. These are maps of log schemes $f\colon Y\to X$ that are log \'etale, locally of finite presentation (in the usual sense), and of Kummer type, i.e. for every geometric point $y$ of $Y$, the morphism $\overline{M}_{X,f(y)}\to \overline{M}_{Y,y}$ is of Kummer type (this is the same condition for the systems of denominators of the previous section). We recall that being log \'etale is defined by an infinitesimal lifting property along log thickenings \cite[Definition IV.3.1.1]{ogus-book}, and is also equivalent to \'etale locally having charts modelled on injective monoid homomorphisms $P\to Q$ with finite cokernel of order prime to $\chara k$ \cite[Theorem IV.3.3.1]{ogus-book}.

\begin{example}
Of course all strict \'etale finite covers are also Kummer \'etale covers. Prototypical examples of non-classically \'etale but Kummer \'etale covers are (tamely) ramified covers of the affine toric variety $\Spec k[P]\to \Spec k[P]$ for some fs monoid $P$, induced my multiplication by $n \colon P\to P$, with $\chara k \nmid n$. The simplest example is the $n$-th power map $\bA^1\to \bA^1$ sending $x$ to $x^n$.
\end{example}

Kummer \'etale covers enjoy many of the properties of usual \'etale covers, for example they can be split (i.e. they become isomorphic to a projection of the form $\bigsqcup_{i\in I} X\to X$ for some finite set $I$) by passing to some Kummer \'etale cover, and if $X\to Y\to Z$ and $Y\to Z$ are finite Kummer \'etale, then $X\to Y$ also is: the fact that it is log \'etale is an immediate consequence of definitions, and for the ``Kummer'' property - which isn't much harder - see \cite[Remark 6.13]{irs}

\begin{notation}
If $X$ is a scheme or an algebraic stack, we will denote by $\fet(X)$ the category of finite \'etale covers of $X$. Similarly if $X$ is a log scheme or log algebraic stack, we will denote by $\fket(X)$ the category of finite Kummer \'etale covers of $X$.

In the same spirit, we will often abbreviate ``finite \'etale'' with \emph{f\'et} and ``finite Kummer \'etale'' with \emph{fk\'et}.
\end{notation}

We will also have to consider the fundamental group of a log geometric fiber of a morphism between fs log schemes, which is not itself an fs log scheme. The theory of Kummer \'etale covers and the log fundamental group for saturated (but not necessarily fs) log schemes has been recently developed in \cite{beyondfs}, which we refer the reader to. Things work more or less in the same way as for fs log schemes, with the addition of the important \emph{sfp} condition, which stands for ``finitely presented up to saturation''. It is worthwhile to point out that finite Kummer \'etale covers of a saturated log schemes are not in general finite morphisms of schemes (as the name seems to suggest), but only integral (i.e. they are affine morphisms locally given by integral extensions of rings), see  \cite[Section 4.2]{beyondfs}. In any case, we will not have to use any deep fact about the development of the theory.

\subsection{Log geometric fibers}\label{sec:log.g.fibers}

In this section we recall the notion of log geometric fibers and prove some useful facts about them, especially regarding the interactions with root stacks.

\begin{definition}
Let $f\colon X\to Y$ be a morphism of fs log schemes, and ${y}\to Y$ a log geometric point. The \emph{log geometric fiber} of $f$ at $y$ is the saturated log scheme $X_{{y}}$ given by the fibered product $X\times_Y{y}$, in the category of saturated log schemes.
\end{definition}

Of course the log geometric fiber will typically not be a fs log scheme.

\begin{definition}\label{def:log.g.connected}
A morphism $f\colon X\to Y$ of fs log schemes is \emph{log geometrically connected} if for every strict log geometric point ${y}\to Y$, the log geometric fiber $X_{{y}}$ is connected.
\end{definition}

\begin{remark}
It is natural to wonder whether it would be equivalent to ask that the fiber over \emph{any} log geometric point be connected. However there is no a priori reason to expect this, since morphisms between log geometric points might not be integral, and therefore when taking the fibered product one might have to pass to a closed subscheme, and connectedness might be lost. If $f$ is integral, this issue certainly doesn't occur, and the stronger condition is equivalent to the weaker one that we are using.
\end{remark}

\begin{remark}
Definition \ref{def:log.g.connected} may look more natural, once one accepts to work with non finitely generated monoids, than \cite[Definition 1.7]{hoshi}, which requires all fibers over ``reduced covering points'' over the given geometric point to be connected. Since a strict log geometric point is the inverse limit of these reduced covering points (which can be seen the ``rigidified'' reduction of a root stack of the log point), our definition does coincide with Hoshi's at least when $f$ is quasi-compact and intergral (using \cite[Proposition 8.4.1]{EGA4-3}). \end{remark}

\begin{remark}\label{rmk:log.fibers.vs.fibers}
If a log geometrically connected morphism $X\to Y$ is assumed to be integral, then the underlying morphism of schemes is geometrically connected, as in that case the morphism $X_y\to \underline{X}_{\underline{y}}$ is surjective (the target is the geometric fiber of the underlying map of schemes $\underline{X}\to \underline{Y}$). If $X\to Y$ is also saturated, then the underlying scheme of the log geometric fibers coincide with the corresponding geometric fiber.
\end{remark}

For future use, we include the following results, that relate log geometric fibers of morphisms of log schemes to those of an induced morphism of root stacks.

\begin{lemma}\label{lemma:root.mono}
Any root stack $\cT\to T$ is a monomorphism on saturated log schemes, i.e. if $S$ is a saturated log scheme and $S\rightrightarrows \cT$ are morphism of log stacks such that the composites $S\rightrightarrows \cT\to T$ are equal, than the two morphisms are also equal.
\end{lemma}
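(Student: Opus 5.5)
We prove Lemma \ref{lemma:root.mono} by unwinding what a morphism of log stacks $S\to \cT=\sqrt[N]{T}$ consists of, in the spirit of Remark \ref{remark:valued.in.sets}. Such a morphism is determined by three pieces of data. The first is its composite $g\colon S\to T$, which is a morphism of log schemes. The second is a DF structure $L'\colon N|_S\to \Div_S$ extending the pullback DF structure $L|_S\colon \overline{M}_T|_S\to \Div_S$. The third is a homomorphism of sheaves of monoids $\phi\colon N|_S\to \overline{M}_S$, compatible with $\overline{M}_T|_S\to \overline{M}_S$, together with a $2$-isomorphism between $L'$ and the composite $N|_S\xrightarrow{\phi}\overline{M}_S\xrightarrow{L_S}\Div_S$. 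This $2$-isomorphism is required to restrict on $\overline{M}_T|_S$ to the one already given by $g$. The strategy is to show that, once $g$ is fixed, the remaining data are uniquely determined up to unique isomorphism.

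\textbf{Step 1: $\phi$ is determined by $g$.} Let $\phi_1,\phi_2\colon N|_S\to\overline{M}_S$ be two such homomorphisms restricting to the same map on $\overline{M}_T|_S$. Fix a local section $n$ of $N|_S$. Since $\overline{M}_T\to N$ is of Kummer type, some multiple $mn$ comes from $\overline{M}_T$. Hence $m\phi_1(n)=m\phi_2(n)$ in $\overline{M}_S$.

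We now need that $\overline{M}_S$ is torsion-free as a monoid, i.e. that $ma=mb$ implies $a=b$. This holds because $S$ is saturated, so the stalks $\overline{M}_{S,s}$ are sharp and saturated; in particular $\overline{M}_{S,s}^{\gp}$ is torsion-free, and $\overline{M}_{S,s}$ injects into it. Therefore $\phi_1=\phi_2$, checking on stalks at geometric points.

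This step is the main point of the argument. One must make sure that the sharp-saturated-implies-torsion-free-$\gp$ argument is valid without finite generation, and that we may argue stalkwise on the étale site. We verify torsion-freeness directly: if $m(a-b)=0$ in $\overline{M}^{\gp}$, then saturation gives $a-b\in\overline{M}$ and $b-a\in\overline{M}$, so $a-b$ is a unit and hence $0$ by sharpness.

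\textbf{Step 2: $L'$ is determined.} Once $\phi$ is fixed, $L'$ is forced to be isomorphic to $L_S\circ\phi$ through the given $2$-isomorphism. So two morphisms with the same $g$ are related by an isomorphism of DF structures $L'_1\cong L'_2$ that is compatible with both $2$-isomorphisms to $L_S\circ\phi$. Compatibility forces this isomorphism to be the composite of the given ones, so it exists and is unique. In particular there are no nontrivial automorphisms, exactly as in Remark \ref{remark:valued.in.sets}.

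Consequently the two morphisms $S\rightrightarrows\cT$ are uniquely $2$-isomorphic. Since $\Hom(S,\cT)$ is equivalent to a set, this means they are equal, which proves that $\cT\to T$ is a monomorphism on saturated log schemes.
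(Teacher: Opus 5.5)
Your proposal is correct and follows the paper's own proof. In both, the morphisms are unwound into the common composite $S\to T$ together with maps $N|_S\to\overline{M}_S$, and the Kummer property of $\overline{M}_T\to N$ combined with torsion-freeness of $\overline{M}_S$ forces these maps to coincide. You go further than the paper in two places: you check torsion-freeness directly for saturated monoids that need not be finitely generated, and you spell out why the DF data are then determined up to unique isomorphism, which the paper leaves implicit via Remark~\ref{remark:valued.in.sets}.
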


We emphasize the ``equal'', and not merely isomorphic, even if $\cT$ is a stack. This is because the functor represented by $\cT$ on log schemes is actually valued in sets (see Remark \ref{remark:valued.in.sets}).

\begin{proof}
Let $f,g\colon S\to \cT$ be the two morphisms. By construction of $\cT$ (say it is the root stack $\sqrt[N]{T}$ corresponding to the Kummer morphism $\overline{M}_T\to N$) and the fact that the composites with $\cT\to T$ coincide, it follows that these are determined by a morphism of log schemes $ S\to T$, together with homomorphisms $f^\flat,g^\flat\colon N|_S \to \overline{M}_S$ commuting over $\Div_S$, and such that the composites $\overline{M}_T|_S\to  N|_S\rightrightarrows \overline{M}_S$ concide.

From this, since the first morphism is Kummer and $\overline{M}_S$ is torsion-free, we conclude that $f^\flat=g^\flat$, and therefore $f=g$.
\end{proof}

\begin{lemma}\label{lemma:log.unique.lift}
If $\cT\to T$ is a root stack and $t\to T$ is a log geometric point, then there is a canonical  isomorphism $t\stackrel{\cong}{\to} \cT\times_Tt$.
\end{lemma}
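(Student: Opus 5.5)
The plan is to construct a canonical section $t\to \cT\times_T t$ of the projection $\cT\times_T t\to t$, and then show that this projection is a monomorphism. Write $\cT=\sqrt[N]{T}$ for a Kummer system of denominators $\overline{M}_T\to N$, and write $t=\Spec(\Omega\to K)$.

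\textbf{Step 1: constructing the lift.} By the functorial description of root stacks as log stacks (Remark \ref{remark:valued.in.sets}), a morphism of log stacks $t\to\cT$ lying over $t\to T$ amounts to two pieces of data. The first is a homomorphism $N|_t\to \overline{M}_t=\Omega$ extending $\overline{M}_T|_t\to\Omega$. The second is the induced extension of the DF structure, i.e. the composite $N|_t\to\Omega\to\Div_t$.

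Stalkwise, $N_{\underline t}$ is a Kummer extension of $\overline{M}_{T,\underline t}$: every $n\in N_{\underline t}$ has a positive multiple $mn$ coming from $\overline{M}_{T,\underline t}$. Since the Kummer condition in our setting only involves $m$ prime to $\chara k$, and $\Omega$ is uniquely divisible by such $m$, we can define the image of $n$ as $\frac1m$ of the image of $mn$ in $\Omega$. This is well defined and additive because $\Omega$ is torsion-free and uniquely divisible. The sheaves involved on $t$ are just their stalks, since $\underline t$ is a separably closed point.

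The commutativity with $\Div_t$ holds because the DF structure of $t$ sends every nonzero element of $\Omega$ to $(\cO,0)$ and $0$ to $(\cO,1)$. Both composites therefore agree, with a canonical witness. This gives a morphism $t\to\cT$ over $T$, hence a section $\sigma\colon t\to \cT\times_T t$.

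\textbf{Step 2: reducing the isomorphism to a monomorphism statement.} By Lemma \ref{lemma:root.mono}, $\cT\to T$ is a monomorphism on saturated log schemes. Its base change $\cT\times_T t\to t$ is then a monomorphism on saturated log schemes too, since a map from a saturated $S$ into the fibered product is a compatible pair of maps. A monomorphism that admits a section is an isomorphism on the functors of points restricted to saturated log schemes.

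\textbf{Step 3 (main obstacle): comparing with the actual fibered product.} The statement is about $\cT\times_T t$ as an object, the fibered product taken in the saturated category, not just about its functor of points. I would argue as follows. The fibered product represents the functor $S\mapsto \Hom(S,\cT)\times_{\Hom(S,T)}\Hom(S,t)$ on saturated log schemes; this is the defining property, and its existence is guaranteed by the Notation convention. Step 2 shows that this functor is identified with $\Hom(-,t)$ via $\sigma$ and the projection.

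The remaining subtlety is that $\cT\times_T t$ is a priori a log stack rather than a log scheme, so the Yoneda argument must be run for stacks fibered over saturated log schemes. By Remark \ref{remark:valued.in.sets}, all the relevant Hom-groupoids are discrete, so $\cT\times_T t$ is equivalent to a sheaf of sets. Yoneda then gives that $\sigma$ and the projection are mutually inverse isomorphisms. Canonicity follows because the lift in Step 1 involved no choices: unique divisibility forces the extension to $N$.
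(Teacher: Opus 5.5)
Your proposal is correct and follows the paper's proof essentially step for step. The canonical lift $t\to\cT$ comes from unique divisibility of $\Omega$: $\overline{M}_T|_t\to\Omega$ extends uniquely to the Kummer extension $N|_t$, which gives the DF datum automatically. Then Lemma \ref{lemma:root.mono} makes $\cT\times_T t\to t$ a monomorphism with a section, hence an isomorphism.

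Your Step 3, where discreteness of the Hom-groupoids lets Yoneda apply to the log stack $\cT\times_T t$, is extra care that the paper leaves implicit. So is your explicit restriction to denominators prime to $\chara k$. That restriction is genuinely needed: without it the unique extension to $N|_t$ need not exist.
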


In other words, log geometric points of a root stack are the same as log geometric points of the underlying log scheme.

\begin{proof}
Write $t=\Spec(\Omega\to K)$. Note that, since the monoid $\Omega$ is uniquely divisible, the morphism $t\to T$ has a unique lift to $t\to \cT$: indeed, if $\overline{M}_{{T}}|_t\to A$ is a Kummer morphism, then there is a unique extension $A\to \Omega$ of the given $\overline{M}_{{T}}|_t\to \Omega$, and therefore a unique diagram $A\to \Omega\to \Div_{\Spec K}$ lifting $\overline{M}_{{T}}|_t\to \Omega\to \Div_{\Spec K}$.

It follows that the projection $\cT\times_T t\to t$ has a canonical section. The conclusion now follows from Lemma \ref{lemma:root.mono}, since $\cT\times_T t\to t$ will be a monomorphism with a section, hence an isomorphism.
\end{proof}

\begin{proposition}
Let $X\to Y$ be a morphism of fs log schemes, and $\cX\to \cY$ be an induced morphism between two root stacks, for some compatible systems of denominators on $X$ and $Y$.

Then for every log geometric point $y\to Y$ the natural  map $\cX\times_\cY y \to X\times_Y y$ between the log geometric fibers gives a homeomorphism of the underlying stacks.
\end{proposition}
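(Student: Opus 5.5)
We would factor the map $\cX\times_\cY y\to X\times_Y y$ as
$$
\cX\times_\cY y \longrightarrow \cX\times_Y y=\cX\times_X(X\times_Y y)\longrightarrow X\times_Y y.
$$
By Lemma \ref{lemma:log.unique.lift} we have $y\cong \cY\times_Y y$. Hence $\cX\times_\cY y\cong \cX\times_\cY(\cY\times_Y y)\cong \cX\times_Y y$, and the first arrow is an isomorphism. It then suffices to show that the second arrow, which is the base change of the root stack $\cX\to X$ along $X_y:=X\times_Y y\to X$, is a homeomorphism on underlying stacks.

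We then reduce to a local statement. A root stack $\cX=\sqrt[N]{X}\to X$ is a coarse moduli space map. On underlying spaces it is a universal homeomorphism: it is proper, quasi-finite, surjective, and radicial on geometric points. The point is that the fibre over a geometric point is a classifying stack $B\mu$ of a finite diagonalizable group, so it has a single point. One could hope to conclude directly from universality. The subtlety is that the fibered product is taken in saturated log stacks, so its underlying stack is in general not the fibre product of underlying stacks. It may be a closed substack of it, and it may be modified by saturation. So we need to control the log structure.

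To do this we work \'etale locally on $X$ with a chart $P\to M_X$ and a Kummer extension $P\to Q$ giving $N$. Then $\cX=[\underline{X}\times_{\Spec\bZ[P]}\Spec\bZ[Q]/\mu]$, where $\mu$ is the Cartier dual of $Q^\gp/P^\gp$. Since the log structure of $X_y$ is saturated, we compute $\cX\times_X X_y$ as $[W/\mu]$. Here $W$ is the saturated fibre product of $\Spec(Q\to\bZ[Q])\to\Spec(P\to \bZ[P])$ with $X_y$. The morphism $P\to Q$ is Kummer, and any Kummer extension of a saturated monoid becomes trivial after saturation against a uniquely divisible monoid: at a point of $X_y$ the relevant monoids are $Q\oplus_P \overline{M}_{X_y}$, and we saturate them. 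The key sub-step is then a geometric-point check. Over any geometric point $z$ of $X_y$, Lemma \ref{lemma:root.mono} applied to the saturated log scheme $X_y$ shows that $\cX\times_X X_y\to X_y$ is a monomorphism on saturated log schemes. An argument as in Lemma \ref{lemma:log.unique.lift} then shows that the fibre over $z$ has exactly one point. This uses that the stalks of $\overline{M}_{X_y}$ relevant here are divisible enough, since they are built from $\Omega$ and the Kummer saturation. So the map is bijective on geometric points.

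Finally we need closedness. The map $W\to X_y$ is integral, being a base change of a finite map followed by saturation, which is integral on rings. So $[W/\mu]\to X_y$ is universally closed. It is also surjective and bijective on points, hence a homeomorphism.

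The main obstacle is the previous step. We have to verify that saturation of the pushout $Q\oplus_P\overline{M}_{X_y,z}$ really leaves a single point over $z$, rather than killing points or creating several. This is where the non-finitely generated, divisible nature of $\Omega$ enters. We would attack it by reducing to the log point $z$ with monoid $\overline{M}_{X_y,z}$ and repeating the unique-extension argument of Lemma \ref{lemma:log.unique.lift} for Kummer homomorphisms into it.
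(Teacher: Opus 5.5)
Your first reduction ($\cX\times_\cY y\cong\cX\times_Y y$ via Lemma \ref{lemma:log.unique.lift}) and your closedness argument (the saturated base change of $\cX\to X$ is integral, hence universally closed) are fine. They agree with the paper, which phrases the latter as preservation of properness.

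The gap is in the step you flag as the main obstacle, bijectivity on geometric points, and the strategy you propose for it does not work. You justify it by saying the relevant stalks of $\overline{M}_{X_y}$ are ``divisible enough''. This is false: $\overline{M}_{X_y,z}$ is in general not divisible at all. Take $Y=\Spec k$ and $y=\Spec \bar k$, both with trivial log structure ($\Omega=0$), $X=\bA^1_k$ with the log structure of the origin, and $\cX=\sqrt[n]{X}$. Then $X_y=\bA^1_{\bar k}$ with $\overline{M}_{X_y,0}=\bN$. The log point $z=\Spec(\bN\to\bar k)$ does not lift to $\cX$ at all, since no homomorphism $\frac{1}{n}\bN\to\bN$ restricts to the identity on $\bN$. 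So ``repeating the unique-extension argument for Kummer homomorphisms into $\overline{M}_{X_y,z}$'' produces nothing, even though the fibre of $[\bA^1_{\bar k}/\mu_n]\to\bA^1_{\bar k}$ over $0$ is indeed a single point. Similarly, Lemma \ref{lemma:root.mono} alone does not give injectivity on points. Two geometric points of $\cX\times_X X_y$ over $z$, with their pulled-back log structures, are not two log maps out of a common saturated log scheme, and the underlying morphism of stacks is not a monomorphism.

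The missing idea, which is exactly the paper's argument, is to enlarge the log structure at $z$ to the strict log geometric point $\Spec((\overline{M}_{X_y,z})_\bQ\to K)$, whose monoid is uniquely divisible.
\begin{itemize}
\item \emph{Surjectivity.} Composed with $X_y\to X$, this is a log geometric point of $X$. It lifts to $\cX$ by Lemma \ref{lemma:log.unique.lift}, and the universal property of the fibre product gives a point of $\cX\times_X X_y$ over $z$.
\item \emph{Injectivity.} Dominate two geometric points over $z$ by a common geometric point. Turn all three into strict log geometric points; they share the same $\bQ$-ified monoid, because the maps on characteristic monoids are Kummer. Then uniqueness of lifts along $\cX\to X$ identifies the two points.
\end{itemize}
With this, the chart description $[W/\mu]$ is unnecessary.

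If you prefer to keep the chart, you can close the gap by a direct computation instead. Let $R=(Q\oplus_P\overline{M}_{X_y,z})^{\rm sat}$. The fibre of $W$ over $z$ is topologically $\Spec K[R^*]$, where $R^*$ is the torsion subgroup of $R^\gp$ and embeds into $Q^\gp/P^\gp$, so $\mu$ acts transitively on it. That is a different argument from the unique-extension one you sketched.
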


\begin{proof}
First of all note that the diagram with Cartesian squares
$$
\xymatrix{
\cX\times_\cY y\ar[r]\ar[d] & \cX\ar[d]\\
y\ar[r]\ar[d]^\cong & \cY\ar[d]\\
y\ar[r] & Y,
}
$$
where the lower left map is an isomorphism thanks to the previous lemma, shows that the natural map $\cX\times_\cY y\to \cX\times_Y y$ is an isomorphism. 

We now check that the natural map $\cX\times_Y y\to X \times_Y y$ is a homeomorphism of the underlying stacks. Note first of all that, being a base-change of $\cX\to X$, it is a proper morphism (the appropriate diagram is Cartesian in the saturated sense, but the normalization map and closed embeddings are proper, so properness is preserved by passing to the saturated fiber product).

It suffices therefore to show that the morphism is injective and surjective on geometric points. Both follow from the fact that log geometric points lift uniquely along projections of root stacks.

For surjectivity, let $\Spec K\to X\times_Y y$ be a geometric point, that we can turn into a strict log geometric point $\Spec(\Omega \to K)\to X\times_Y y $. We have then that the composite $\Spec (\Omega \to K)\to X\times_Y y \to X$ gives a log geometric point of $X$. Since log geometric points lift along projections of root stacks, we have a lift $\Spec (\Omega \to K)\to \cX$. This in turn induces $\Spec(\Omega \to K)\to \cX\times_Y y$ lifting the given geometric point, thanks to the universal property of the fibered product.

As for injectivity, assume that we have two geometric points $x \to \cX\times_Y y$ and $x' \to \cX\times_Y y$ that map to the same point of $ X\times_Y y$. This means that there is a third geometric point $x''$ of $ X\times_Y y$ with a diagram $x\leftarrow x'' \to x'$ over $ X\times_Y y$, and we can turn $x,x',x''$ into log geometric points of $ X\times_Y y$ in the obvious way. We then obtain a commutative diagram of log schemes over $ X\times_Y y$, in particular also over $X$.

Using again the fact that log geometric points lift uniquely along the root stack projection $\cX\to X$, we can conclude that the diagram  $x\leftarrow x'' \to x'$ is also naturally over $\cX$. From the universal property of the fibered products, the log geometric point $x''$ lifts to a log geometric point of $\cX\times_Y y$, and the resulting diagram of log geometric points $x\leftarrow x'' \to x'$ of  $\cX\times_Y y$ shows that $x=x'$ as points of $|\cX\times_Y y|$.
\end{proof}

\begin{corollary}\label{cor:connected.root}
A morphism $X\to Y$ of fs log schemes is log geometrically connected if and only if the same is true for one (equivalently, any) induced morphism of compatible root stacks $\cX\to \cY$ of $X$ and $Y$.\qed
\end{corollary}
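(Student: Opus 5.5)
The corollary should follow almost formally from the preceding Proposition, together with Lemma \ref{lemma:log.unique.lift}. The two points to check are these. Definition \ref{def:log.g.connected} quantifies over \emph{strict} log geometric points, and this notion must mean the same thing for $Y$ and for the root stack $\cY$. Connectedness should then transfer through the homeomorphism of log geometric fibers.

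First I would match up strict log geometric points. By Lemma \ref{lemma:log.unique.lift}, every log geometric point $y\to Y$ lifts uniquely to $y\to \cY$, and conversely any log geometric point of $\cY$ composes to one of $Y$. So log geometric points of $\cY$ and of $Y$ correspond bijectively over the same underlying geometric point. Strictness is preserved in both directions. If $\cY=\sqrt[N]{Y}$, then $\overline{M}_\cY$ at the point is $N_y$, which is of Kummer type over $\overline{M}_{Y,y}$. Since $N_y$ is Kummer over $\overline{M}_{Y,y}$ and the saturation data is fs, we get $(N_y)_\bQ=(\overline{M}_{Y,y})_\bQ$ inside the common rational vector space. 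Hence $(N_y)_\bQ\to\Omega$ is an isomorphism if and only if $(\overline{M}_{Y,y})_\bQ\to \Omega$ is. This step needs only a line of care about the prime-to-$\chara k$ convention for $\bQ$: the Kummer indices involved are prime to $\chara k$ for the systems of denominators considered, or one reads $P_\bQ$ as agreeing for Kummer extensions whose index is invertible in $K$.

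Second, for a strict log geometric point $y\to Y$ with lift $y\to\cY$, the Proposition gives a homeomorphism $|\cX\times_\cY y|\to |X\times_Y y|$. So one fiber is connected if and only if the other is. Running over all strict log geometric points, and using the bijection from the first step, gives that $X\to Y$ is log geometrically connected if and only if $\cX\to\cY$ is. Here connectedness of a stack is read via its underlying topological space, which is the natural meaning of Definition \ref{def:log.g.connected} for stacks.

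The ``one (equivalently, any)'' clause is then automatic, because each induced morphism of root stacks is equivalent to $X\to Y$ itself. The only real subtlety, and thus the main obstacle, is the first step: checking that strictness of a log geometric point is insensitive to passing to a root stack, i.e.\ that rationalization identifies a monoid with its Kummer extensions. If that becomes delicate in positive characteristic, I would instead argue directly with an arbitrary log geometric point of $\cY$. I would factor it through the strict point over the same geometric point, and note that the Proposition applies to any log geometric point, not only strict ones.
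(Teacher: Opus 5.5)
Your proposal is correct and is essentially the paper's argument: the corollary is left with a \qed as an immediate consequence of the preceding Proposition (the homeomorphism $|\cX\times_\cY y|\cong|X\times_Y y|$) combined with the unique lifting of log geometric points from Lemma \ref{lemma:log.unique.lift}. Your extra check that strict log geometric points of $Y$ and of $\cY$ correspond, via $(N_y)_\bQ=(\overline{M}_{Y,y})_\bQ$ for Kummer extensions of index prime to $\chara k$, spells out a detail the paper leaves to the reader; the paper tacitly needs this prime-to-$\chara k$ assumption already for Lemma \ref{lemma:log.unique.lift}.
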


\subsection{Fk\'et morphisms via root stacks}\label{sec:correspondence}

We now explain how Kummer \'etale covers of a quasi-compact  fs log scheme $X$ can be identified with finite \'etale covers of some root stack of $X$.

The construction is outlined in \cite[Section 2]{Olsson}. In that paper, Olsson explains how to construct the finite \'etale cover, but doesn't prove that one can go back, and construct a Kummer \'etale cover of $X$ starting from a finite \'etale cover of some root stack $\sqrt[n]{X}$. In order to do that, we exploit one of the main results of \cite{irs}, stating that the Kummer \'etale topos of $X$ is equivalent to an appropriately defined \'etale site of the infinite root stack $\sqrt[\infty]{X}'$. 

We begin by recalling Olsson's construction. Assume that $Y\to X$ is a $\fket$ cover. In particular $Y$ is also quasi-compact, and we can pick $n\in \bN$ such that $\overline{M}_Y$ can be identified with a subsheaf of $\frac{1}{n}\overline{M}_X|_Y$, with $\chara k\nmid n$. One can then consider the root stack $\cY$ of $Y$ corresponding to the Kummer morphism $\overline{M}_Y\to \frac{1}{n}\overline{M}_X|_Y$. There is a strict morphism of root stacks $\cY\to \sqrt[n]{X}$, fitting into a commutative diagram of log stacks
$$
\xymatrix{
\cY\ar[r]\ar[d] & \sqrt[n]{X}\ar[d]\\
Y\ar[r] & X.
}
$$
Note that the upper map will induce, for any further root stack $\sqrt[m]{X}$ with $n\mid m$, a f\'et cover $\cY'\to \sqrt[m]{X}$ by pullback along the projection $\sqrt[m]{X}\to \sqrt[n]{X}$, and by passing to infinite root stacks, the finite \'etale morphism $\sqrt[\infty]{Y}'\to \sqrt[\infty]{X}'$ induced by $Y\to X$.

The last observation provides the link with the aforementioned equivalence of topoi proven in \cite{irs}. Let us briefly recall how that works as well: sending a k\'et morphism (not necessarily finite, e.g. this could be a strict open embedding) $Y\to X$ to the induced morphism $\sqrt[\infty]{Y}'\to \sqrt[\infty]{X}'$ gives a morphism of sites from the k\'et site of $X$ to the \emph{\'etale site} of $\sqrt[\infty]{X}'$, defined in the natural manner by considering representable \'etale maps. It is proven in \cite[Theorem 6.22]{irs} that the induced morphism of topoi is an equivalence (the proof, which is omitted in loc. cit. but is an obvious variant of the proof of \cite[Theorem 6.22]{irs}, works with prime-to-$\chara k$ root stacks, and in fact this restriction is actually needed to make the proof run through).

This also gives an equivalence between the categories of finite locally constant sheaves in the two topoi. These categories correspond (using standard descent arguments - see \cite[Proposition 3.13]{illusie-survey} and \cite[Proposition 4.3.6]{beyondfs} for the log side) to finite Kummer \'etale covers of $X$ and finite \'etale covers of $\sqrt[\infty]{X}'$.

By construction, the fk\'et cover $Y\to X$ will correspond to the finite \'etale cover $\sqrt[\infty]{Y}'\to \sqrt[\infty]{X}'$, which will descend to the $\cY\to \sqrt[n]{X}$ constructed by Olsson (note that there is an isomorphism $\fet(\sqrt[\infty]{X}')\cong \varinjlim_n \fet(\sqrt[n]{X})$, with $\chara k\nmid n$, constructed in the obvious way - as can be seen by standard arguments elaborating on \cite[Theorem 6.1]{irs}).

We record the outcome of this discussion in the following proposition.

\begin{proposition}\label{prop:correspondence}
There is an equivalence $$\fket(X)\cong \fet(\sqrt[\infty]{X}')\cong \varinjlim_{\chara k\nmid n} \fet(\sqrt[n]{X}),$$ that associates to a fk\'et cover $Y\to X$ the f\'et cover $\cY\to \sqrt[n]{X}$ between the root stacks determined by the Kummer extensions of $\overline{M}_Y$ and $\overline{M}_X$ into  $\frac{1}{n}\overline{M}_X$. \qed
\end{proposition}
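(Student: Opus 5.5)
The plan is to construct functors in both directions and check that they are mutually quasi-inverse. The ``\'etale site of $\sqrt[\infty]{X}'$'' appears only as an intermediate step. Throughout I assume $X$ is a quasi-compact fs log scheme, as in the discussion above.

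\emph{Step 1: the functor $\Phi\colon\fket(X)\to\varinjlim_n\fet(\sqrt[n]{X})$.} Let $Y\to X$ be fk\'et. By quasi-compactness there is an $n$ with $\chara k\nmid n$ and an embedding $\overline{M}_Y\hookrightarrow\frac1n\overline{M}_X|_Y$, which is Kummer. Let $\cY$ be the corresponding root stack of $Y$. The map $\cY\to\sqrt[n]{X}$ is strict, since both characteristic sheaves are $\frac1n\overline{M}_X$. Strictness together with log \'etaleness gives that it is \'etale. Finiteness can be checked \'etale locally on $X$, using a chart $P\to Q$ with $Q\subset\frac1nP$. There $\cY\to\sqrt[n]{X}$ is the quotient of a finite \'etale map
\[
\Spec\bigl(\cO\otimes_{\bZ[Q]}\bZ[\tfrac1nP]\bigr)\to\Spec\bigl(\cO\otimes_{\bZ[P]}\bZ[\tfrac1nP]\bigr)
\]
by $\mu_n$-type groups; this is Olsson's computation. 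Replacing $n$ by a multiple $m$ amounts to pulling back along $\sqrt[m]{X}\to\sqrt[n]{X}$, because the root-stack construction is compatible with base change. Hence $\Phi$ is well defined in the colimit, and morphisms of fk\'et covers induce morphisms of root stacks by the functoriality of root stacks recalled earlier.

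\emph{Step 2: full faithfulness and essential surjectivity via \cite[Theorem 6.22]{irs}.} The prime-to-$p$ version of that theorem gives an equivalence between the Kummer \'etale topos of $X$ and the \'etale topos of $\sqrt[\infty]{X}'$. It therefore restricts to an equivalence of the subcategories of locally constant constructible sheaves of finite sets. Both sides of this equivalence are identified with the categories of finite covers by descent: on the log side by \cite[Proposition 3.13]{illusie-survey}, since fk\'et covers are representable and are exactly the objects locally split in the k\'et topology, and in the same way on the stack side. Under these identifications, the topos equivalence sends the sheaf represented by $Y$ to the sheaf represented by $\sqrt[\infty]{Y}'=\varprojlim_m\cY\times_{\sqrt[n]X}\sqrt[m]X$. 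This yields $\fket(X)\simeq\fet(\sqrt[\infty]{X}')$, and the composite functor agrees with $\Phi$ followed by pullback to $\sqrt[\infty]{X}'$.

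\emph{Step 3: $\fet(\sqrt[\infty]{X}')\simeq\varinjlim_n\fet(\sqrt[n]{X})$.} This is a limit argument of the type in EGA IV \S8 (finite presentation, $\varinjlim$ of categories of finitely presented objects), carried out on stacks. Since $\sqrt[\infty]X'=\varprojlim\sqrt[n]X$ has affine transition maps over $X$, work \'etale locally on $X$, where each $\sqrt[n]X=[U_n/\mu_n^r]$ with $U_\infty=\varprojlim U_n$ affine. A finite \'etale cover of the quotient $[U_\infty/\mu_\infty^r]$ is a finite \'etale equivariant algebra. By the standard limit theorems for affine schemes, such an algebra descends to a finite \'etale algebra over some $U_n$. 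Its equivariance, being a finite amount of data for the profinite group acting through its finite quotients, descends after enlarging $n$. The same limit argument, applied to $\Hom$-sets, gives full faithfulness. The local descents are unique up to enlarging $n$, so they glue by quasi-compactness.

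I expect Step 3 to be the main obstacle, namely making the limit argument rigorous for non-algebraic pro-stacks. It is also the point where the prime-to-$p$ restriction matters, since it keeps each $\sqrt[n]X$ Deligne--Mumford. Steps 1 and 2 are mostly bookkeeping once \cite{irs} and Olsson's local computation are accepted.
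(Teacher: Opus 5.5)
Your proposal is correct and follows the paper's route: Olsson's construction of $\cY\to\sqrt[n]{X}$, then the prime-to-$\chara k$ version of \cite[Theorem 6.22]{irs} restricted to finite locally constant sheaves and identified with finite covers by descent, and finally the identification $\fet(\sqrt[\infty]{X}')\cong\varinjlim_n\fet(\sqrt[n]{X})$, which the paper only attributes to standard arguments around \cite[Theorem 6.1]{irs} and which you sketch in more detail. One small correction to your closing remark: the paper says the prime-to-$\chara k$ restriction is needed for the topos equivalence in your Step 2, whereas your limit argument in Step 3 does not use that the root stacks are Deligne--Mumford.
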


\begin{notationnn}
In light of this correspondence, for the rest of the paper we will without further mention only consider root stacks for indices $n$ such that $\chara k\nmid n$.
\end{notationnn}

For future use, we record a compatibility of this construction with respect to pullbacks along saturated morphisms. 

\begin{proposition}\label{prop:pullback}
Let $X\to Y$ be a saturated morphism of fs log schemes with $Y$ quasi-compact, and $Z\to Y$ be a fk\'et morphism. Let $\cZ\to \sqrt[n]{Y}$ be the corresponding f\'et morphism for $n$ divisible enough, as in Proposition \ref{prop:correspondence}. Then the projection $\cZ\times_{\sqrt[n]{Y}}\sqrt[n]{X}\to \sqrt[n]{X}$ gives the f\'et morphism corresponding to the pullback $Z\times_Y X\to X$.

\end{proposition}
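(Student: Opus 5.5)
Let $Z':=Z\times_Y X$, the fibered product in fs log schemes; since $X\to Y$ is saturated, this is just the naive fibered product of log schemes. The statement then amounts to exhibiting a canonical isomorphism over $\sqrt[n]{X}$ between $\cZ\times_{\sqrt[n]{Y}}\sqrt[n]{X}$ and the root stack $\cZ'$ attached to $Z'\to X$ by Olsson's construction. This root stack is the one associated with the Kummer extension $\overline{M}_{Z'}\to \frac{1}{n}\overline{M}_X|_{Z'}$. Since the correspondence of Proposition \ref{prop:correspondence} is independent of $n$ (compatible with pulling back along $\sqrt[m]{X}\to\sqrt[n]{X}$), it is enough to do this for one $n$ that works for both $Z\to Y$ and $Z'\to X$.

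\textbf{Step 1: choose $n$.} Pick $n$ with $\chara k\nmid n$ such that $\overline{M}_Z\subseteq \frac1n\overline{M}_Y|_Z$. I claim the same $n$ works for $Z'$. Because $X\to Y$ is saturated (hence integral), the stalks are given by pushouts: $\overline{M}_{Z'}$ is the pushout $\overline{M}_Z\oplus_{\overline{M}_Y}\overline{M}_X$, modulo units, which is already saturated. This pushout maps to $\frac1n\overline{M}_X$ via $\frac1n\overline{M}_Y\to\frac1n\overline{M}_X$.

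\textbf{Step 2: injectivity of the comparison map.} The map $\overline{M}_{Z'}\to\frac1n\overline{M}_X$ must be injective and Kummer. The Kummer property is immediate. For injectivity, use the equational criterion for integral homomorphisms, applied to $\overline{M}_Y\to\overline{M}_X$ together with the injections $\overline{M}_Z\hookrightarrow\frac1n\overline{M}_Y$. Concretely, $\frac1n\overline{M}_Y\oplus_{\overline{M}_Y}\overline{M}_X$ is integral, and it injects into $\frac1n\overline{M}_X$ because saturatedness gives exactly this; pushouts of an injection along an integral map stay injective. I expect this monoid computation to be the main obstacle, and I would reduce it to charts, where the maps become fs monoid homomorphisms $P\to Q$ integral and saturated, $P\subseteq R\subseteq \frac1nP$.

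\textbf{Step 3: compare the functors of points.} Once Step 2 is in hand, compare the two stacks by their moduli descriptions. A $T$-point of $\cZ\times_{\sqrt[n]{Y}}\sqrt[n]{X}$ consists of a map $T\to Z'$ (since $\cZ\to Z$ and $\sqrt[n]{X}\to X$ are the structure maps, and $\underline{Z'}=\underline{Z}\times_{\underline Y}\underline X$), together with DF extensions $L_1$ to $\frac1n\overline{M}_Y|_T$ restricted from $\overline{M}_Z$, and $L_2$ to $\frac1n\overline{M}_X|_T$. These must agree on $\frac1n\overline{M}_Y$; note that in $\cZ$ the extension lives on $\frac1n\overline{M}_Y$ and is strict over $\sqrt[n]{Y}$. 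Hence the data reduces to a single extension $L_2$ of the DF structure of $Z'$ to $\frac1n\overline{M}_X$. The required compatibility is automatic because the DF structure on $\overline{M}_{Z'}$ is the pushout of those on $\overline{M}_Z$ and $\overline{M}_X$. This is precisely a $T$-point of $\cZ'$.

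\textbf{Step 4: conclude.} The isomorphism is functorial and lies over $\sqrt[n]{X}$, so $\cZ\times_{\sqrt[n]{Y}}\sqrt[n]{X}\to\sqrt[n]{X}$ is the f\'et cover corresponding to $Z'\to X$, as claimed.
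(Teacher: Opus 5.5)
Your proposal is correct and has the same architecture as the paper's proof. It makes the same choice of $n$, identifies $\overline{M}_{Z'}$ with the already saturated pushout $\overline{M}_Z\oplus_{\overline{M}_Y}\overline{M}_X$, checks that $\overline{M}_{Z'}\to\frac{1}{n}\overline{M}_X|_{Z'}$ is an injective Kummer map, and identifies $\cZ\times_{\sqrt[n]{Y}}\sqrt[n]{X}$ with the corresponding root stack of $Z'$ via functors of points.

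The only real divergence is the injectivity step. The paper proves a lemma for arbitrary Kummer extensions $P\subseteq R$, $Q\subseteq S$ directly. It multiplies a relation $q+g(r)=q'+g(r')$ by $N$ to land in $Q$, then applies the equational criterion for the integral map $P\to Q$ to get $N(q+r)=N(q'+r')$ in $Q\oplus_P R$. It concludes because this fs sharp pushout is torsion-free.

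You instead factor through $\frac{1}{n}\overline{M}_Y\oplus_{\overline{M}_Y}\overline{M}_X$. This separates the part needing only integrality from the part needing saturatedness. The first part is the cobase change of the injection $\overline{M}_Z\hookrightarrow\frac{1}{n}\overline{M}_Y$ along an integral map, which is injective already on groups. The second part is injectivity of the relative Frobenius $Q\oplus_P\frac{1}{n}P\to\frac{1}{n}Q$.

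However, ``saturatedness gives exactly this'' is the paper's lemma in a special case, so you should justify it. The kernel of $(Q\oplus_P\frac{1}{n}P)^{\gp}\to\frac{1}{n}Q^{\gp}$ is $n$-torsion. The pushout is saturated, by saturatedness of $P\to Q$. It is also sharp, because $P\to Q$ comes from a morphism of log schemes and is therefore local. Hence torsion elements of its group are units, and so vanish.

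The same group-level argument applies verbatim to $Q\oplus_P R\to S$. Its kernel is torsion because $R$ is Kummer over $P$ and $Q^{\gp}\to S^{\gp}$ is injective. So your factorization is illuminating but not actually needed.
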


\begin{proof}
Let $n \in \bN$ be such that $\overline{M}_Z$ can be identified with a subsheaf of $\frac{1}{n}\overline{M}_Y|_Z$, as in the previous discussion. Recall (see Section \ref{sec:log.schemes}) that the log structure on the fibered product $Z'=Z\times_Y X$ has characteristic monoid $\overline{M}_{Z'}$ locally given by the coproduct of the diagram 
$$
\xymatrix{
\overline{M}_Y|_{Z'}\ar[r]\ar[d] & \overline{M}_Z|_{Z'} \\
\overline{M}_X|_{Z'} &
}
$$
made integral and saturated. These extra steps are not necessary here, because $X\to Y$ is a saturated morphism.

From this, starting from the obvious homomorphisms   $\overline{M}_Z|_{Z'}\to \frac{1}{n}\overline{M}_Y|_{Z'}\to \frac{1}{n}\overline{M}_X|_{Z'}$ and $\overline{M}_X|_{Z'}\to \frac{1}{n}\overline{M}_X|_{Z'}$, we obtain an induced morphism $\overline{M}_{Z'}\to \frac{1}{n}\overline{M}_X|_{Z'}$, which has the defining property of a Kummer morphism (thanks to the composite $\overline{M}_X|_{Z'}\to \overline{M}_{Z'}\to \frac{1}{n}\overline{M}_X|_{Z'}$ being the inclusion), except possibly for injectivity. After passing to stalks at a geometric point, this follows from the following lemma.

\begin{lemma}
Let $P, Q, R, S$ be sharp fs monoids with two morphisms $f\colon P\to Q$, $g\colon R\to S$, and two Kummer morphisms $P\hookrightarrow R$, $Q\hookrightarrow S$ (which we think of as inclusions), such that the two composites $P\to S$ coincide. Assume that $f$ is saturated. Then the induced morphism $Q\oplus_P R\to S$ is injective.
\end{lemma}

\begin{proof}
Recall that elements of the coproduct $Q\oplus_P R$ are formal sums $q+r$ with $q\in Q$ and $r\in R$, up to the equivalence relation generated by $(q+f(p))+r=q+(p+r)$ for $p\in P$.

Let us assume that $q+r$ and $q'+r'$ have the same image in $S$, i.e. $$q+g(r)=q'+g(r')$$ (recall that we are thinking of $Q$ as a submonoid of $S$ via the Kummer morphism). Now since $P\to R$ is Kummer, there exists a natural number $N$ and $p,p'\in P$ such that $Nr=p, Nr'=p'$. Moreover, note that $g(Nr)=f(p)$ and $g(Nr')=f(p')$. By multiplying the relation $q+g(r)=q'+g(r')$ in $S$ by $N$, we obtain $$Nq+f(p)=Nq'+f(p'),$$ and this equality holds in $Q$ (since $Q\hookrightarrow S$ is injective).

Since $f$ is integral, there exist $p_1,p_1' \in P$ and $q''\in Q$ such that $Nq=f(p_1)+q'', Nq'=f(p_1')+q''$ and $p+p_1=p'+p_1'$. Now we can see that $N(q+r)=N(q'+r')$ in $Q\oplus_P R$, since
$$
Nq+Nr=(q''+f(p_1))+p=q''+(p_1+p)=
$$
$$=q''+(p_1'+p')=(q''+f(p_1'))+p'=Nq'+Nr'.
$$
To conclude it is enough to note that $Q\oplus_P R$ is torsion-free, since it is fs (recall that $f$ is saturated) and sharp (sharpness follows from \cite[Proposition 4.2.5-(3.)]{ogus}).
\end{proof}

We now show that $\cZ\times_{\sqrt[n]{Y}}\sqrt[n]{X}$ is the root stack $\cZ'$ of $Z'$ with respect to the system of denominators $\overline{M}_{Z'}\to \frac{1}{n}\overline{M}_X|_{Z'}$, concluding the proof.

We clearly have compatible morphisms $\cZ'\to \cZ$, $\cZ'\to \sqrt[n]{X}$, and $\cZ'\to \sqrt[n]{Y}$, described at the level of functors by restricting the DF structure $\frac{1}{n}\overline{M}_X|_T\to \Div_T$ on a $\cZ'$-scheme $T$ to the appropriate subsheaf. These induce a morphism $\cZ'\to \cZ\times_{\sqrt[n]{Y}}\sqrt[n]{X}$.

This is in fact an equivalence: given a scheme $T$ with a morphism $T\to \cZ\times_{\sqrt[n]{Y}}\sqrt[n]{X}$, we obtain in particular compatible morphisms $T\to \underline{Z}$ and $T\to \underline{X}$, together with compatible DF structures $\frac{1}{n}\overline{M}_X|_T\to \Div_T$ and $\frac{1}{n}\overline{M}_Y|_T\to \Div_T$, extending the pullbacks of the DF structures of $X,Y$ and $Z'$.

From the morphisms of schemes we get a map $T\to \underline{Z'}$ (recall that this is the usual fibered product of schemes, since $X\to Y$ is saturated), and the DF structure $\frac{1}{n}\overline{M}_X|_T\to \Div_T$ gives an object of the root stack $\cZ'$. We leave the remaining routine verifications to the reader.
\end{proof}

\subsection{Full faithfulness of pullback}

We now prove a logarithmic analogue of full faithfulness of pullback on \'etale morphisms for certain morphisms of schemes \cite[IX, Corollaire 3.4]{SGA1}, which will be used later in a couple of instances. Let us first recall the precise statement.

\begin{proposition}(\cite[IX, Corollaire 3.4]{SGA1})
Let $f\colon X\to Y$ be a geometrically connected and universally submersive morphism of schemes. Then if $Y$ is connected, so is $X$, and the pullback functor $f^*\colon \et(Y)\to \et(X)$ is fully faithful.
\end{proposition}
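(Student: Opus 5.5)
The plan is the standard argument from \cite[IX]{SGA1}, which reduces everything to effective descent for étale morphisms along universally submersive maps together with connectedness of fibers. There are two assertions: connectedness of $X$, and full faithfulness of $f^*$ on étale $Y$-schemes.

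For connectedness, suppose $X=U\sqcup V$ with $U,V$ open and closed. Every fiber $X_y$ is connected, since geometric connectedness implies connectedness of the fiber over each point $y\in Y$. Hence each fiber lies entirely in $U$ or entirely in $V$. So $U=f^{-1}(f(U))$ and $V=f^{-1}(f(V))$ are saturated, and $f(U)$, $f(V)$ are disjoint; they cover $Y$ once we check that $f$ is surjective. Surjectivity holds because a submersive morphism is surjective by definition. As $f$ is submersive and $U$, $V$ are saturated open sets, their images $f(U)$, $f(V)$ are open. Connectedness of $Y$ then forces one of $U$, $V$ to be empty.

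For full faithfulness, take étale $Y$-schemes $Y_1,Y_2$. A morphism $Y_1\to Y_2$ over $Y$ is the same as its graph, which is an open and closed subscheme $\Gamma\subseteq Y_1\times_Y Y_2$ mapping isomorphically to $Y_1$; the graph is open because $Y_2\to Y$ is étale, hence unramified, and closed under a separatedness assumption. We may therefore reduce to the following claim. For an étale $Y$-scheme $Z$ (here $Z=Y_1\times_Y Y_2$), open and closed subschemes of $Z$ correspond bijectively to those of $Z\times_Y X$ via pullback. The condition ``maps isomorphically to $Y_1$'' then descends automatically: the map $\Gamma\to Y_1$ is étale, and its bijectivity and isomorphism property can be checked after the surjective base change $X_1=Y_1\times_Y X\to Y_1$. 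Indeed, this base change is universally submersive, in particular fpqc-like for the purpose of checking bijectivity, and an étale universally injective surjective morphism is an open immersion that is surjective, hence an isomorphism.

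To prove the claim, note that $Z\times_Y X\to Z$ is again geometrically connected and universally submersive, since both properties are stable under base change. Apply the argument of the first paragraph to it, now without assuming connectedness. An open and closed subset $W\subseteq Z\times_Y X$ is a union of fibers, since each fiber is connected, and so equals the preimage of its image. That image is open, and so is the image of the complement, so the image is open and closed in $Z$. Injectivity of the correspondence is immediate from surjectivity of the base change.

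The main subtlety is the separatedness needed to make the graph closed. Without it, I would instead argue with morphisms as open subschemes of $Y_1\times_Y Y_2$ that are étale over $Y_1$ and universally injective. Openness still descends by submersivity, and universal injectivity descends by surjectivity. This avoids closedness entirely and is the route I would actually take.
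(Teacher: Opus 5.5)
Your argument is correct and complete when $Y_2\to Y$ is separated. This covers finite \'etale covers, which is the only case the paper needs (in Lemma \ref{lemma:ffstacks}); the paper gives no proof of its own and simply cites SGA1. The following steps are all fine:
\begin{itemize}
\item the connectedness part;
\item the graph description of morphisms;
\item the descent of open and closed subsets along $g\colon Z\times_Y X\to Z$ via connected fibers;
\item faithfulness via surjectivity;
\item the descent of ``$\Gamma\to Y_1$ is an isomorphism'' via \'etale, universally injective and surjective.
\end{itemize}

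The gap is in the non-separated route. You say this is the route you would actually take, and the statement needs it, since $\et(Y)$ contains non-separated \'etale $Y$-schemes. There the graph $\Gamma'\subseteq Z\times_Y X$ of $u'\colon X_1\to X_2$ is only open, and ``openness descends by submersivity'' does not apply as stated. Submersivity makes $g(\Gamma')$ open only if $\Gamma'$ is saturated, i.e.\ $\Gamma'=g^{-1}(g(\Gamma'))$. In your separated argument, saturation came from $\Gamma'$ being open and closed and the fibers of $g$ being connected. A merely open set can meet a connected fiber in a proper nonempty subset, so you need a new argument that uses geometric connectedness together with bijectivity of $\Gamma'\to X_1$.

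For instance, take a geometric point $\bar y$ of $Y$ and write $(Y_1)_{\bar y}=\bigsqcup_i\bar y$ and $(Y_2)_{\bar y}=\bigsqcup_j\bar y$. Then $(Z\times_Y X)_{\bar y}=\bigsqcup_{i,j}X^{(i,j)}_{\bar y}$, with each $X^{(i,j)}_{\bar y}\cong X_{\bar y}$. Since $\Gamma'_{\bar y}\to (X_1)_{\bar y}=\bigsqcup_i X_{\bar y}$ is an isomorphism, for fixed $i$ the open sets $W_{ij}=\Gamma'_{\bar y}\cap X^{(i,j)}_{\bar y}$, viewed in $X_{\bar y}$, are pairwise disjoint and cover $X_{\bar y}$. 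By connectedness, exactly one of them is all of $X_{\bar y}$ and the others are empty. Every fiber $g^{-1}(z)$ is the image of some $X^{(i,j)}_{\bar y}$, so $\Gamma'$ is a union of fibers of $g$, and from there your argument goes through.

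Alternatively, you could first show that a universally submersive morphism is a descent morphism for \'etale schemes; there the cocycle condition on $X\times_Y X$ supplies saturation. Then check that geometric connectedness of the fibers of $X\times_Y X\to Y$ makes the cocycle condition automatic.
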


Here is the logarithmic version that we will need. Note that we do not concern ourselves with the full \'etale site, and only look at finite Kummer \'etale covers instead.

\begin{proposition}\label{prop:ff}
Let $f\colon X\to Y$ be a universally submersive, saturated and log geometrically connected morphism of fs log schemes. Then the functor $f^*\colon \ket(Y)\to \ket(X)$ is fully faithful.

In particular, if $Y'\to Y$ is a Galois k\'et cover with group $G$, then the pullback $f^*Y'\to X$ is also a Galois k\'et cover with group $G$.
\end{proposition}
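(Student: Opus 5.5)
The plan is to reduce to the classical statement \cite[IX, Corollaire 3.4]{SGA1} for a morphism of root stacks, using the correspondence of Proposition \ref{prop:correspondence}. Strictly speaking, we only need full faithfulness on the finite Kummer étale covers used later, and the argument below is written for those (for a general k\'et morphism one would run it étale locally on $Y$). Since full faithfulness can be checked locally on $Y$ and Kummer étale covers are quasi-compact locally, we may assume $Y$ (and $X$ over it) quasi-compact. Given $Z_1,Z_2\in\fket(Y)$, we choose $n$ (prime to $\chara k$) divisible enough that both correspond, as in Proposition \ref{prop:correspondence}, to f\'et covers $\cZ_1,\cZ_2\to \sqrt[n]{Y}$. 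By Proposition \ref{prop:pullback}, which is where saturatedness of $f$ is used, the pullbacks $f^*Z_i$ correspond to $\cZ_i\times_{\sqrt[n]{Y}}\sqrt[n]{X}\to\sqrt[n]{X}$. Morphisms in $\fket$ are computed as a colimit over $n$ of morphisms of f\'et covers of $\sqrt[n]{(-)}$, so it suffices to prove the following: for each $n$, pullback along $f_n\colon \sqrt[n]{X}\to\sqrt[n]{Y}$ is fully faithful on finite étale covers, compatibly with increasing $n$.

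For the classical criterion applied to $f_n$, two properties are needed.

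\textbf{Geometric connectedness.} By Corollary \ref{cor:connected.root}, $f_n$ is log geometrically connected. Because $f$ is saturated, and hence integral, $f_n$ is also saturated: Proposition \ref{prop:pullback} shows its log structure is computed by the plain pushout. Then Remark \ref{rmk:log.fibers.vs.fibers} identifies the underlying log geometric fibers with the geometric fibers, so the underlying morphism of stacks is geometrically connected.

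\textbf{Universal submersiveness.} The maps $\sqrt[n]{X}\to X$ and $\sqrt[n]{Y}\to Y$ are proper, surjective and universal homeomorphisms on underlying spaces (coarse moduli maps of tame stacks). Hence $f_n$ is universally submersive if and only if $\underline{f}$ is. To make this precise under an arbitrary base change $T\to\sqrt[n]{Y}$, one compares with the base change along $T\to Y$ and uses that $\sqrt[n]{X}\times_{\sqrt[n]{Y}}T\to \underline{X}\times_{\underline{Y}}T$ is a homeomorphism. The latter follows by the same unique-lifting argument as in the proposition preceding Corollary \ref{cor:connected.root}.

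With these two properties, the stack version of SGA1 IX 3.4 gives that $f_n^*$ is fully faithful on f\'et covers. The SGA proof is by descent along the effective descent morphism $f_n$ plus the connectedness of $X\times_Y X$-type fibers, and it passes to algebraic stacks by working smooth-locally. Passing to the colimit over $n$ yields full faithfulness of $f^*$ on $\fket$.

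\textbf{Galois statement.} Let $Y'\to Y$ be Galois with group $G$, so that $G$ acts and $Y'\times_Y Y'\cong \bigsqcup_G Y'$. Pullback preserves this isomorphism (again using saturatedness so the fibered products agree). Full faithfulness gives $\mathrm{Aut}_X(f^*Y')=\mathrm{Aut}_Y(Y')=G$. It also gives connectedness of $f^*Y'$: idempotent decompositions of $f^*Y'$ correspond to clopen subobjects, i.e. subcovers, which descend, and $Y'$ is connected.

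\textbf{Main obstacle.} I expect the hardest step to be justifying universal submersiveness of $f_n$ from that of $\underline f$. This requires controlling saturated fibered products of root stacks against arbitrary (non-log) base changes, and it is where the homeomorphism statements for root stacks must be extended beyond log geometric points. A fallback is to avoid this entirely: run the SGA descent argument directly on $X\to Y$, using that $f$ is a universal effective epimorphism for finite étale-type objects over the coarse spaces.
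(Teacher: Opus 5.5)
Your proposal is correct and uses the same reduction as the paper. You translate via Proposition \ref{prop:correspondence}, identify the pullbacks via Proposition \ref{prop:pullback} (where saturatedness enters), and deduce everything from full faithfulness of pullback of f\'et covers along $f_n\colon\sqrt[n]{X}\to\sqrt[n]{Y}$. The differences are all in this last step.

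The paper isolates it as Lemma \ref{lemma:ffstacks} and proves it by factoring $f_n$ through its relative coarse moduli space. The representable part reduces to SGA1 by descent, and for a coarse space map full faithfulness amounts to surjectivity on $\pi_1^{\et}$, which is Noohi's theorem. You instead transport SGA's own argument directly to stacks: descent, plus the fact that the two pullbacks of a morphism to $\sqrt[n]{X}\times_{\sqrt[n]{Y}}\sqrt[n]{X}$ agree on an open and closed locus containing the diagonal, hence everywhere by connectedness of the geometric fibers. This route needs neither finite inertia nor Noohi's result. You should, however, say explicitly that only descent, not effective descent, is used. Effectiveness along an arbitrary universally submersive morphism is not something you can assume. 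You should also say why universally submersive morphisms of stacks are descent morphisms for representable \'etale separated maps (reduce to schemes via smooth atlases).

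In exchange, you check the hypotheses on $f_n$, which the paper applies Lemma \ref{lemma:ffstacks} to without comment. Two corrections there:
\begin{itemize}
\item $f_n$ is saturated simply because $\frac1n\overline{M}_Y\to\frac1n\overline{M}_X$ is isomorphic to $\overline{M}_Y\to\overline{M}_X$. Proposition \ref{prop:pullback} is not the reason.
\item Your ``main obstacle'' is already settled by your own sketch. Give $T$ the strict log structure; saturatedness of $f_n$ then makes the saturated log fibered product have the plain one as underlying stack, and unique lifting yields the homeomorphism $\sqrt[n]{X}\times_{\sqrt[n]{Y}}T\to\underline{X}\times_{\underline{Y}}T$.
\end{itemize}
Saturatedness is essential in that homeomorphism. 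For log points with $\bN\to\bN$, $1\mapsto 2$, and $n=2$, the fiber of $f_2$ over a geometric point has two points while that of $f$ has one. In the case used later in the paper, $f_n$ is flat, surjective and locally of finite presentation, so universal submersiveness is immediate anyway.

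Your treatment of the Galois statement, which the paper does not spell out, is fine. Connectedness of $f^*Y'$ is cleanest stated as full faithfulness applied to maps into $Y\sqcup Y$, which classify clopen decompositions.
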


We will apply this to the case of a log smooth, saturated and locally finitely presented morphism $f$ (which will be universally submersive, since saturated and log smooth implies flat, and hence $f$ will be fppf).

For the proof, we will use the correspondence outlined in the previous subsection to reduce to a question about morphisms of root stacks, and then we will apply the following Lemma.

\begin{lemma}\label{lemma:ffstacks}
Let $f\colon \cX\to \cY$ be a geometrically connected and universally submersive morphism of Deligne--Mumford stacks with finite relative inertia. Then the pullback functor $f^*\colon \fet(\cY)\to \fet(\cX)$ is fully faithful.
\end{lemma}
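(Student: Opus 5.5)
We reduce full faithfulness to a statement about sections, exactly as in the scheme case. Given $\cY_1,\cY_2\in\fet(\cY)$, morphisms $\cY_1\to\cY_2$ over $\cY$ correspond to sections of $\cY_1\times_\cY\cY_2\to\cY_1$ that are open and closed substacks mapping isomorphically to $\cY_1$; more precisely, to open and closed substacks of $\cY_1\times_\cY\cY_2$ mapping isomorphically to $\cY_1$ (the graph). The same holds for the pullbacks to $\cX$. The hypotheses on $f$ are stable under base change along the finite \'etale $\cY_1\to\cY$: geometric connectedness and universal submersiveness are stable, and the base change is again a morphism of Deligne--Mumford stacks with finite relative inertia. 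So we may replace $\cY$ by $\cY_1$ and $\cY_2$ by $\cY_1\times_\cY\cY_2$. It then suffices to prove two things for $f$ as in the statement and any $\cZ\in\fet(\cY)$. First, pulling back along $f$ gives a bijection between open and closed substacks of $\cZ$ and those of $\cZ\times_\cY\cX$. Second, an open and closed substack of $\cZ$ maps isomorphically to $\cY$ if and only if its pullback maps isomorphically to $\cX$.

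The second point reduces to the first. Being an isomorphism for a finite \'etale map is the statement that its degree is identically $1$, which is a locally constant function on the base. Since $f$ is surjective, the degree function on $\cY$ is $1$ everywhere if and only if its pullback to $\cX$ is. Here surjectivity follows from submersiveness, or from geometric connectedness, which forces nonempty fibers.

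For the first point we work on topological spaces $|\cdot|$. The map $|\cZ\times_\cY\cX|\to|\cZ|$ is a base change of $f$, so it is submersive, hence a quotient map. Its fibers over points of $|\cZ|$ are fibers of $f$ over the corresponding points of $\cY$, after a finite separable extension of residue field, so they are connected by geometric connectedness. A quotient map with connected fibers induces a bijection on clopen subsets: a clopen $U$ upstairs is a union of fibers, because each fiber is connected and meets $U$ either fully or not at all, so $U$ is saturated. Its image is then clopen because the map is a quotient. Since open and closed substacks correspond to clopen subsets of $|\cdot|$, this proves the first point, and the lemma follows.

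The main obstacle is making the fiber argument rigorous for stacks. One must check that the points of $|\cZ\times_\cY\cX|$ over a point $z\in|\cZ|$ are exactly the points of $|\cX\times_\cY \Spec K|$ for a suitable field $K$, and that this fiber is connected. This is where the finite relative inertia hypothesis is used. It guarantees that, after passing to a field-valued point $\Spec K\to\cY$ representing the image of $z$, the fiber $\cX\times_\cY\Spec K$ is an algebraic space, or at least a stack whose topological space is the honest fiber of $|\cX|\to|\cY|$. To obtain this description of $|\cdot|$ of a fiber product we would cite the standard surjectivity of $|\cA\times_\cC\cB|\to|\cA|\times_{|\cC|}|\cB|$ from the Stacks Project, and then check that geometric connectedness transfers from geometric fibers to the fibers over $\Spec K$ with $K$ a finite separable extension.
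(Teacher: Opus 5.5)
Your argument is correct, but it takes a different route from the paper.

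\textbf{The paper's route.} It factors $f$ through the relative coarse moduli space $\cX\to\cX'\to\cY$.
\begin{itemize}
\item The representable, universally submersive part $\cX'\to\cY$ is handled by descent from the scheme case \cite[IX, Corollaire 3.4]{SGA1}.
\item For $\cX\to\cX'$, it reduces by descent to a coarse moduli map $\cX\to X$ onto a connected scheme. There it rewrites full faithfulness as surjectivity of $\pi_1^\et(\cX)\to\pi_1^\et(X)$ and quotes \cite[Theorem 7.11]{noohi}.
\end{itemize}
The Deligne--Mumford and finite-inertia hypotheses are used exactly there: to have the relative coarse space and to apply Noohi's result.

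\textbf{Your route.} You instead carry the proof behind SGA1 directly over to stacks. The graph trick reduces everything to sections, and a quotient map with connected fibres gives a bijection on clopen subsets. This is more elementary and self-contained, and it never uses the DM or finite-inertia hypotheses. It works for any geometrically connected, universally submersive morphism of algebraic stacks. The paper's route is shorter on the page, but only because it outsources the work to heavier citations and to several implicit descent steps.

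\textbf{A mistaken remark about the fibre step.} Your claim about where finite inertia enters is wrong, though harmless. The fibre $\cX\times_\cY\Spec K$ is in general not an algebraic space; think of $B\mu_n\to\Spec k$. You also do not need $|\cX\times_\cY\Spec K|$ to coincide with the set-theoretic fibre. All you need is the following:
\begin{itemize}
\item Represent $z\in|\cZ|$ by a morphism $\Spec K\to\cZ$ with $K$ algebraically closed.
\item By the surjectivity of $|\cA\times_\cC\cB|\to|\cA|\times_{|\cC|}|\cB|$, the fibre of $|\cZ\times_\cY\cX|\to|\cZ|$ over $z$ is the continuous image of $|\cX\times_\cY\Spec K|$.
\item That space is a geometric fibre of $f$, hence connected, so the fibre over $z$ is connected.
\end{itemize}
No residue-field extension or inertia condition enters this step.
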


\begin{proof}
Consider the relative coarse moduli space $\cX\to \cX'\to \cY$ \cite[Section 3]{AOV}. The pullback along $f$ is then the composition of the two pullbacks. Since $\cX'\to \cY$ is representable and universally submersive (because the composite $\cX\to \cX'\to \cY$ is), a standard descent argument shows that fully faithfulness of pullback follows immediately from the case of schemes.

As for the morphism $\cX\to \cX'$, again by descent we can assume $\cX'=X$ is a connected scheme, so that we are looking at a coarse moduli space map $\cX\to X$. Moreover, full faithfulness of the pullback functor on f\'et covers is equivalent to the fact that the induced homomorphism $\pi_1^\et(\cX, x')\to \pi_1^\et(X,x)$ of \'etale fundamental groups is surjective, where $x'$ is some geometric point of $\cX$ and $x$ is its image in $X$ (the proof of this is completely formal, see for example \cite[\href{https://stacks.math.columbia.edu/tag/0BN6}{Tag 0BN6}]{stacks-project}).

On the other hand, in the case of a coarse moduli space morphism it is known that the homomorphism of fundamental groups is surjective \cite[Theorem 7.11]{noohi}.
\end{proof}

\begin{proof}[Proof of Proposition \ref{prop:ff}]

Let us fix two objects $Z,Z'\in \fket(Y)$. By Proposition \ref{prop:correspondence}, we have corresponding objects $\cZ,\cZ'\in \fet(\sqrt[n]{Y})$ for some $n$ sufficiently divisible. Moreover, because of the equivalence of categories between fk\'et and f\'et covers, we have $\Hom_Y(Z,Z')=\Hom_{\sqrt[n]{Y}}(\cZ,\cZ')$, where the first Hom is in the logarithmic category, the second is just morphisms of stacks.

Now by Proposition \ref{prop:pullback}, the morphisms of root stacks of $Z\times_Y X$ and $Z'\times_Y X$ corresponding to the fk\'et covers $Z\times_Y X\to X$ and $Z'\times_Y X\to X$ are precisely the projections $\cZ\times_{\sqrt[n]{Y}}\sqrt[n]{X}\to \sqrt[n]{X}$ and $\cZ'\times_{\sqrt[n]{Y}}\sqrt[n]{X}\to \sqrt[n]{X}$. Using Lemma \ref{lemma:ffstacks} we deduce that
$$
\Hom_{\sqrt[n]{Y}}(\cZ,\cZ')=\Hom_{\sqrt[n]{X}}(\cZ\times_{\sqrt[n]{Y}}\sqrt[n]{X},\cZ'\times_{\sqrt[n]{Y}}\sqrt[n]{X})
$$
and finally, by the same reasoning as in the previous paragraph, we also  have that $\Hom_{\sqrt[n]{X}}(\cZ\times_{\sqrt[n]{Y}}\sqrt[n]{X},\cZ'\times_{\sqrt[n]{Y}}\sqrt[n]{X})=\Hom_X(Z\times_Y X, Z'\times_Y X)$. 

It follows that the composite $\Hom_Y(Z,Z')\to \Hom_X(f^*Z, f^*Z')$ is a bijection.
\end{proof}

\section{Log Stein factorization}

In this section we prove the following special case of Stein factorization for morphisms of fs log schemes.

\begin{proposition}\label{prop:stein.factorization}
Let $f\colon X\to S$ be a proper, finitely presented, log smooth, saturated, log geometrically connected morphism of fs log schemes over $k$, and let $g\colon Y\to X$ be a fk\'et morphism. Then the composite $f\circ g\colon Y\to S$ factors uniquely as
\[
\xymatrix{
Y\ar[r]^{f'} & T\ar[r]^{g'} & S
}
\]
where $f'$ is proper, log geometrically connected, and $g'$ is fk\'et.\end{proposition}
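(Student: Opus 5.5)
The plan is to pass to root stacks via Proposition \ref{prop:correspondence}, build the classical Stein factorization there, and descend back. Since $S$ is quasi-compact locally (we may work \'etale locally on $S$ and glue by uniqueness), choose $n$ divisible enough that $Y\to X$ corresponds to a f\'et cover $\cY\to \sqrt[n]{X}$. The morphism $f$ induces $\sqrt[n]{X}\to\sqrt[n]{S}$. Because $f$ is saturated, this is the base change of $\sqrt[n]{S}\to S$ along $X\to S$ at the level of underlying stacks, by the same computation as in Proposition \ref{prop:pullback}. Hence $\sqrt[n]{X}\to\sqrt[n]{S}$ is proper, flat (log smooth and integral), and finitely presented. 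By Corollary \ref{cor:connected.root} it is also log geometrically connected, hence geometrically connected on underlying stacks by Remark \ref{rmk:log.fibers.vs.fibers}, and its fibers are geometrically reduced (saturated plus log smooth gives reduced fibers). The composite $\cY\to\sqrt[n]{X}\to\sqrt[n]{S}$ is then proper, flat, finitely presented, with geometrically reduced fibers (since $\cY\to\sqrt[n]{X}$ is \'etale). It is also representable.

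Next, take the classical Stein factorization $\cY\to \cT:=\underline{\mathrm{Spec}}\,(h_*\cO_\cY)\to\sqrt[n]{S}$, where $h\colon\cY\to\sqrt[n]{S}$. This exists because $h$ is representable and proper, so it can be formed smooth-locally on $\sqrt[n]{S}$ and descended. The key classical input is \cite[Tag 0BUN]{stacks-project}: for a proper flat finitely presented morphism with geometrically reduced fibers, $h_*\cO$ is finite \'etale and commutes with base change. Applying it after pulling back to an atlas shows that $\cT\to\sqrt[n]{S}$ is f\'et and $\cY\to\cT$ has geometrically connected fibers. By Proposition \ref{prop:correspondence}, $\cT\to \sqrt[n]{S}$ corresponds to a fk\'et cover $g'\colon T\to S$, with $\cT$ the associated root stack of $T$.

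It remains to descend $\cY\to\cT$ to a morphism $f'\colon Y\to T$ and check its properties. For the descent, the morphism of f\'et covers of $\sqrt[n]{X}$, namely $\cY\to \cT\times_{\sqrt[n]{S}}\sqrt[n]{X}$, corresponds via Propositions \ref{prop:correspondence} and \ref{prop:pullback} to a morphism $Y\to T\times_S X$ of fk\'et covers of $X$; composing with the projection gives $f'$. Since $\cY\to \cT$ is proper and the root stack projections are proper homeomorphisms onto coarse spaces, $f'$ is proper. Log geometric connectedness of $f'$ follows from Corollary \ref{cor:connected.root}, using geometric connectedness of $\cY\to\cT$ together with Lemma \ref{lemma:log.unique.lift}, which identifies log geometric points of $T$ and $\cT$.

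For uniqueness, given another factorization $Y\to T_1\to S$, I would argue as in the classical case. Proposition \ref{prop:ff}, applied to a proper log geometrically connected saturated $f'$ (universally submersive since proper and surjective), shows that fk\'et covers of $T$ are determined by their pullback to $Y$. Then a map $T\to T_1$ is obtained by identifying $\cT_1$ with the f\'et part of the pushforward of $\cO_\cY$, through the universal property of relative $\underline{\mathrm{Spec}}$. I expect the main obstacle to be verifying the hypotheses needed for the stacky Stein factorization. In particular, one must check that $\sqrt[n]{X}\to\sqrt[n]{S}$ really is flat with geometrically reduced fibers; this uses that the underlying diagram is cartesian, which relies on saturatedness. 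One must also justify that $f'$ is saturated enough to apply Proposition \ref{prop:ff} in the uniqueness step; if that is awkward, I would prove uniqueness directly on root stacks via $\cT=\underline{\mathrm{Spec}}\,h_*\cO_\cY$.
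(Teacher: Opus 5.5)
There is a genuine gap at the central step. You claim that, because $f$ is saturated, $\sqrt[n]{X}\to\sqrt[n]{S}$ is the base change of $X\to S$ along $\sqrt[n]{S}\to S$. This is false, and so is the consequence you draw from it, that $h\colon\cY\to\sqrt[n]{S}$ is representable. Proposition \ref{prop:pullback} concerns base-changing a f\'et cover of $\sqrt[n]{S}$ along $\sqrt[n]{X}\to\sqrt[n]{S}$, which is a different statement.

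For a counterexample, take $S=\Spec k$ with trivial log structure, $X=\mathbb{P}^1_k$ with the log structure of the divisor $\{0\}$, and $Y=X$. All hypotheses hold and $\sqrt[n]{S}=S$, but $\sqrt[n]{X}$ has a $\mu_n$ stabilizer over $0$. Likewise, at a node of a semistable fiber the relative stabilizer of $\sqrt[n]{X}\to\sqrt[n]{S}$ is $\mu_n$.

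The properties of $h$ you list do survive, for the reasons you give in parentheses, which are the paper's. Properness holds because $\sqrt[n]{X}\to X\to S$ is proper and $\sqrt[n]{S}\to S$ is separated. Flatness comes from log smoothness plus integrality, and reduced fibers from saturatedness. What fails is ``apply Tag 0BUN after pulling back to an atlas''. The pullback of $h$ to an atlas of $\sqrt[n]{S}$ is a morphism from a genuine algebraic stack, so the scheme-theoretic statement does not show that $\underline{\Spec}\,h_*\cO_\cY\to\sqrt[n]{S}$ is finite \'etale. Nor does it show that $\cY\to\cT$ has geometrically connected fibers.

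The missing idea is to factor $h$ through its relative coarse moduli space $\cY\to\cY'\to\sqrt[n]{S}$, as the paper does. Because the stabilizers are of $\mu_n$-type, $h$ is tame. So the coarse space is compatible with base change and $\cO_{\cY'}$ is the pushforward of $\cO_\cY$. Thus $\cT=\underline{\Spec}\,h'_*\cO_{\cY'}$ for the representable proper map $h'\colon\cY'\to\sqrt[n]{S}$, which is still flat with geometrically reduced fibers by Alper's results on good moduli spaces. The scheme case now applies to $h'$ by descent, and $\cY\to\cY'$ is proper with geometrically connected fibers.

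The same non-representability affects your fallback uniqueness argument:
\begin{itemize}
\item Identifying $\cT_1$ with $\underline{\Spec}\,h_*\cO_\cY$ needs $\cO_{\cT_1}\cong(\cY\to\cT_1)_*\cO_\cY$ for a non-representable flat proper morphism with geometrically connected and reduced fibers. The paper invokes a stack-level result for this, Tag 0E0L.
\item Geometric connectedness of $\cY\to\cT_1$ must be extracted from log geometric connectedness of $Y\to T_1$. This uses Corollary \ref{cor:connected.root} and the fact that $\cY\to\cT_1$ is saturated: its log structure map is that of $\sqrt[n]{X}\to\sqrt[n]{S}$, since $\cY\to\sqrt[n]{X}$ and $\cT_1\to\sqrt[n]{S}$ are strict. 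The same saturatedness is what actually makes your deduction of log geometric connectedness of $f'$ work.
\end{itemize}

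Your primary uniqueness route via Proposition \ref{prop:ff} is not viable as stated, since $f'\colon Y\to T$ itself need not be saturated; only its lift $\cY\to\cT$ is. On the other hand, descending $\cY\to\cT$ to $f'$ through the morphism of f\'et covers $\cY\to\cT\times_{\sqrt[n]{S}}\sqrt[n]{X}$ and Proposition \ref{prop:pullback} is a valid alternative to the paper's use of coarse spaces with pushforward log structures.
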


Such a factorization will be called a \emph{log Stein factorization}. It is functorial with respect to morphisms in $\fket(X)$, and it is left adjoint to the pullback functor $f^*$, as we show later (Proposition \ref{prop:stein.properties}). 

\begin{remark}
We stress that our assumptions are quite more restrictive than the standard ones for the existence of the Stein factorization in the non-logarithmic context, and indeed the previous proposition should be a special case of a more general result requiring weaker assumptions. Our method of proof isn't very likely to generalize, since it relies heavily on the fact that we are looking at fk\'et morphisms, and can pass to root stacks to ``hide'' the log structures and employ non-log methods.
\end{remark}

\begin{proof}
In order to construct the factorization we will pass to root stacks and use the ``standard'' Stein factorization for morphisms of stacks. This is worked out for example in \cite[Theorem 11.3]{olsson.sheaves} in the locally Noetherian case, but for convenience we recall the construction next.

\subsection*{Stein factorization for algebraic stacks} Assume that $f\colon \cX\to \cY$ is a proper, finitely presented morphism of algebraic stacks with finite relative inertia, with $\cY$ quasi-compact. We also assume that $f$ is tame, i.e. the relative inertia has linearly reductive geometric fibers. This ensures that the relative coarse moduli space \cite[Section 3]{AOV} is compatible with base-change. 

The pushforward $f_*\cO_\cX$ is a quasi-coherent sheaf of algebras on $\cY$, and we can consider the relative spectrum $\cZ=\underline{\Spec}_\cY f_*\cO_\cX$. We then have a factorization of $f$ as $\cX\to \cZ\to \cY$, where the first map is proper with geometrically connected fibers, and the second one is an integral morphism, as we now show. Note that, thanks to standard limit arguments (see \cite{rydh}), we can also assume that $\cY$ is Noetherian. Consider the factorization $\cX\stackrel{g}{\to} \cX'\stackrel{h}{\to} \cY$ through the relative coarse moduli space (so that $\cX'\to \cY$ is the initial representable morphism factoring $f$), so that we have a factorization $\cX\to\cX'\to \cZ$ of $\cX\to\cZ$. Note moreover that, since $g_*\cO_{\cX}\cong \cO_{\cX'}$, we also have $\cZ\cong \underline{\Spec}_\cY h_*\cO_{\cX'}$, and that $h\colon \cX'\to \cY$ is proper \cite[\href{https://stacks.math.columbia.edu/tag/0DUZ}{Tag 0DUZ}]{stacks-project}. Since $h$ is representable, it follows by the case of schemes and by descent that $\cZ\to \cY$ is integral, and that $\cX'\to \cZ$ is proper with geometrically connected fibers. Since $\cX\to \cX'$ is also proper with geometrically connected fibers, the claim follows. 

If $f\colon \cX\to \cY$ is also flat with geometrically reduced fibers, then $\cZ\to \cY$ is actually finite \'etale (so it is a f\'et cover). This follows from the fact that if $f$ is flat with geometrically reduced fibers, then $\cX'\to \cY$ also is \cite[Theorem 4.16, (viii) and (ix)]{alper}, and this implies that $\cZ\to \cY$ is finite \'etale by descent from the case of schemes \cite[\href{https://stacks.math.columbia.edu/tag/0BUN}{Tag 0BUN}]{stacks-project}.

\begin{remark}\label{rmk:cc.of.fibers}
Later on, we are going to make use of the fact that, at least when $f$ is proper, finitely presented, flat and with reduced geometric fibers, the geometric points of $\cZ$ are in bijection with the connected components of the geometric fibers of $f$. For a proof see \cite[Proposition 3.2.5]{romagny}.
\end{remark}

\subsection*{Construction of the Stein factorization} In view of the claimed uniqueness and strict \'etale descent, we can localize on $S$ and assume that it is quasi-compact (for example affine). As in Section \ref{sec:correspondence}, choose $n\in \mathbb{N}$ such that the map $\overline{M}_Y$ can be identified with a submonoid of $\frac{1}{n}\overline{M}_X|_Y$, and consider the induced strict f\'et morphism $\cY\to \sqrt[n]{X}$.

For the same $n$, consider the root stack $\sqrt[n]{S}\to S$, and  the morphism $h\colon \cY\to \sqrt[n]{S}$ obtained by composing the f\'et morphism $\cY\to \sqrt[n]{X}$ with $\sqrt[n]{X}\to \sqrt[n]{S}$. Note that $h$ is flat, since $\cY\to \sqrt[n]{X}$ is \'etale and $\sqrt[n]{X}\to \sqrt[n]{S}$ is flat, because it is log smooth (this follows for example by noting that the composite $\sqrt[n]{X}\to X\to S$ is log smooth, and $\sqrt[n]{S}\to S$ is log \'etale) and integral (because $X\to S$ is). Moreover, the map $h$ is also proper, since $\cY\to \sqrt[n]{X}$ is finite, hence proper, and $ \sqrt[n]{X}\to\sqrt[n]{S}$ is proper, which follows again from the fact that the composite $\sqrt[n]{X}\to X\to S$ and the map $\sqrt[n]{S}\to S$ are both proper. Finally, since $X\to S$ is saturated, the same is true for $\sqrt[n]{X}\to\sqrt[n]{S}$, and therefore $h$ has geometrically reduced fibers.

Consider now the Stein factorization $\cY\stackrel{\alpha}{\to} \cT\stackrel{\beta}{\to}\sqrt[n]{S}$ of the morphism $h$ described above, so that the morphism $\alpha$ is proper with geometrically connected fibers, and $\beta$ is f\'et. Equip $\cT$ with the pullback log structure from $\sqrt[n]{S}$, so that the factorization is also a factorization in maps of log stacks.

Using again the correspondence outlined in Section \ref{sec:correspondence}, let $T$ be the coarse moduli space of the stack $\cT$, with the maps $f'\colon Y\to T$ and $g'\colon T\to S$ induced by $\alpha$ and $\beta$ respectively. Equipping $T$ with the pushforward log structure from $\cT$, and using the fact that the log structures of $Y$ and $X$ are also the pushforward of the ones on $\cY$ and $\sqrt[n]{X}$ respectively (this is proven in \cite[Lemma 2.11]{Olsson}, which also ensures that the resulting log structure on $T$ is fine and saturated), these morphisms become morphisms of log schemes. We now note that $g'$ is fk\'et (because of Proposition \ref{prop:correspondence}), $f'$ is proper (this is clear, since $g'\circ f'$ is proper, and $g'$ is separated) and log geometrically connected, thanks to Corollary \ref{cor:connected.root}. This gives the required factorization.

Now we can also prove the claimed uniqueness. It is enough to prove it locally on $S$ so we can assume, as we did before, that $S$ is quasi-compact. 
Assume now that $Y\to T\to S$ is a factorization, with $Y\to T$ proper and with log geometrically connected fibers, and $T\to S$ a fk\'et morphism.

By Proposition \ref{prop:correspondence}, the morphism $T\to S$ is determined by a finite \'etale morphism  $\cT\to \sqrt[n]{S}$ for some $n$. Moreover, we can assume that for the same $n$ we also have a diagram $\cY\to \sqrt[n]{X}\to \sqrt[n]{S}$ lifting $Y\to X\to S$. Now the map $\cY\to \sqrt[n]{S}$ factors as $\cY\stackrel{f}{\to} \cT\stackrel{g}{\to} \sqrt[n]{S}$, as is evident from the functorial description of $\cT$ as a root stack. It remains to show that $\cT$ can be canonically identified with the relative spectrum of $(g\circ f)_*\cO_\cY$.

Note first of all that the morphism $f\colon \cY\to \cT$ of log stacks is saturated, because the morphism $f^\flat\colon f^*M_\cT\to M_\cY$ of log structures is induced by the morphism $\sqrt[n]{X}\to \sqrt[n]{S}$, which is saturated because $X\to S$ is, by assymption (recall that both $\cY\to \sqrt[n]{X}$ and $\cT\to \sqrt[n]{S}$ are strict). Therefore, the underlying schemes of the log geometric fibers of $f$ are isomorphic to the geometric fibers. Putting this together with Corollary \ref{cor:connected.root}, It follows that $f$ is geometrically connected.

 The above claim about $\cT$ now follows  from the fact that $\cT\to \sqrt[n]{S}$ is affine, so $\cT\cong \underline{\Spec}_{\sqrt[n]{S}} g_*\cO_\cT$, and that $f_*\cO_\cY\cong \cO_\cT$, because $f$ is flat, proper, of finite presentation and with geometrically connected and geometrically reduced fibers (because the composite $\cY \to \sqrt[n]{S}$ has geometrically reduced fibers and $g$ is \'etale), see \cite[\href{https://stacks.math.columbia.edu/tag/0E0L}{Tag 0E0L}]{stacks-project}.
 \end{proof}

\begin{proposition}\label{prop:stein.properties}
The construction described above defines a Stein factorization functor $\St\colon \fket(X)\to \fket(S)$, which is left adjoint to the pullback $f^*\colon \fket(S)\to \fket(X)$. Moreover, the counit of the adjunction is an isomorphism.
\end{proposition}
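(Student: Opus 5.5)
Functoriality comes first. Let $u\colon Y_1\to Y_2$ be a morphism in $\fket(X)$, with Stein factorizations $Y_i\to T_i\to S$. The composite $Y_1\to Y_2\to T_2$ has to factor uniquely through $Y_1\to T_1$ over $S$. I would prove this by passing to root stacks. Choose $n$ divisible enough that $Y_1$, $Y_2$, $T_1$, $T_2$ all correspond to f\'et covers $\cY_i\to\sqrt[n]{X}$ and $\cT_i\to\sqrt[n]{S}$ (Proposition \ref{prop:correspondence}). In the proof of Proposition \ref{prop:stein.factorization} we showed $\cT_i\cong\underline{\Spec}_{\sqrt[n]{S}}\,h_{i*}\cO_{\cY_i}$. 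The map $\cY_1\to\cY_2$ gives a homomorphism $h_{2*}\cO_{\cY_2}\to h_{1*}\cO_{\cY_1}$ of $\cO_{\sqrt[n]{S}}$-algebras, and hence a morphism $\cT_1\to\cT_2$ over $\sqrt[n]{S}$. By the equivalence of Proposition \ref{prop:correspondence} this descends to a morphism $T_1\to T_2$ in $\fket(S)$. Compatibility with compositions and identities follows from functoriality of $\underline{\Spec}$ and of pushforward. This defines $\St$.

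For the adjunction, take $Y\in\fket(X)$ and $W\in\fket(S)$. I want a natural bijection $\Hom_S(\St(Y),W)\cong\Hom_X(Y,f^*W)$. Given $T=\St(Y)\to W$ over $S$, precompose with $f'\colon Y\to T$ and use the universal property of $f^*W=W\times_S X$ together with $g\colon Y\to X$. Conversely, a map $Y\to f^*W$ over $X$ gives $Y\to W$ over $S$, and I have to show it factors uniquely through $f'$. Again work on root stacks: $W$ corresponds to a f\'et cover $\cW\to\sqrt[n]{S}$, and $Y\to W$ gives $\cY\to\cW$ over $\sqrt[n]{S}$. Since $\cW\to\sqrt[n]{S}$ is affine, morphisms $\cY\to\cW$ over $\sqrt[n]{S}$ are the same as algebra maps $\pi_*\cO_\cW\to h_*\cO_\cY$, where $\pi\colon\cW\to\sqrt[n]{S}$ is the structure map. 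These in turn are the same as maps $\cT=\underline{\Spec}\,h_*\cO_\cY\to\cW$ over $\sqrt[n]{S}$. Translating back through Proposition \ref{prop:correspondence} (and Proposition \ref{prop:pullback}, which identifies the root-stack model of $f^*W$ with $\cW\times_{\sqrt[n]{S}}\sqrt[n]{X}$) gives the bijection. Naturality in $Y$ and $W$ is formal.

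For the counit $\St(f^*W)\to W$, I would use the uniqueness part of Proposition \ref{prop:stein.factorization}. The projection $f^*W=W\times_S X\to W$ is a base change of $f$, so it is proper. It is also log geometrically connected, because log geometric fibers of a base change are log geometric fibers of $f$ at log geometric points of $W$; these are fibers over log geometric points of $S$, and since $f$ is integral the connectedness of fibers over arbitrary log geometric points follows from the strict case, as in the remark after Definition \ref{def:log.g.connected}. Moreover $W\to S$ is fk\'et, so $f^*W\to W\to S$ is a log Stein factorization, and uniqueness gives that the counit is an isomorphism.

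The main obstacle I expect is this counit step, namely justifying that $f^*W\to W$ is log geometrically connected. This needs care because $W$ carries a non-strict log structure relative to $S$, and a log geometric point of $W$ maps to a possibly non-strict log geometric point of $S$. My plan is to argue on root stacks instead. There $\cW\times_{\sqrt[n]{S}}\sqrt[n]{X}\to\cW$ is a base change of $\sqrt[n]{X}\to\sqrt[n]{S}$, which is geometrically connected by Corollary \ref{cor:connected.root} combined with saturatedness (geometric fibers equal log geometric fibers). Geometric connectedness is stable under base change, so the pushforward of the structure sheaf along this map is $\cO_\cW$, and $\underline{\Spec}$ recovers $\cW$ directly.
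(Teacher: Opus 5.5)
Your proposal is correct and follows essentially the same route as the paper. Functoriality and the adjunction both come from taking the relative spectrum of $\cY\to\sqrt[n]{S}$ and using the universal property of $\underline{\Spec}$ against the affine f\'et cover $\mathcal{W}\to\sqrt[n]{S}$; your use of Proposition~\ref{prop:pullback} to identify $\Hom_X(Y,f^*W)$ with $\Hom_{\sqrt[n]{S}}(\cY,\mathcal{W})$ spells out what the paper leaves implicit. The counit is handled, as in the paper, by the uniqueness part of Proposition~\ref{prop:stein.factorization} applied to $f^*W\to W\to S$.

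The step you flag as the main obstacle is easier than you expect. Since $W\to S$ is Kummer \'etale, the stalk maps $\overline{M}_{S}\to\overline{M}_{W}$ become isomorphisms after applying $(-)_\bQ$. Hence strict log geometric points of $W$ map to strict log geometric points of $S$, and the log geometric fibers of $f^*W\to W$ are literally log geometric fibers of $f$ over strict points, with no appeal to integrality needed.
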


For simplicity of notation, If $Y\to X$ is a fk\'et cover we will just write $\St(Y)$ instead of the more precise $\St(Y\to X)$.

\begin{proof}
Let $Y\to Y'$ be a morphism of fk\'et covers of $X$. There is an induced morphism $\St(Y)\to \St(Y')$, simply coming from the fact that the map $\cY\to \cY'$ over $\sqrt[n]{S}$ (with $n$ divisible enough, as usual) induces a morphism $\cT\to \cT'$ between the relative affinizations, which then passes to the coarse moduli spaces, compatibly with the log structures. Compatibility with composition is also checked similarly. 

We now show adjointness. Note that a morphism $Y\to f^*U=U\times_S X$ over $X$ is the same as a morphism $Y\to U$ over $S$. Both $Y\to S$ and $U\to S$ lift to maps $\cY\to \sqrt[n]{S}$ (coming from the morphism $\cY\to \sqrt[n]{X}$ of Proposition \ref{prop:correspondence}, composed with $\sqrt[n]{X}\to \sqrt[n]{S}$) and $\cU\to \sqrt[n]{S}$ (again from Proposition  \ref{prop:correspondence}), and by the functorial description of the root stacks, we also have a lifting $\cY\to \cU$ over $\sqrt[n]{S}$. Since $\cU\to \sqrt[n]{S}$ is affine, we obtain an induced morphism $\cT\to \cU$ from the relative affinization $\cT$ of $\cY\to \sqrt[n]{X}$ (which is f\'et over $\sqrt[n]{S}$). Taking coarse moduli spaces and induced log structures, this induces a morphism $\St(Y)\to U$. The inverse function is immediate to construct, and it is clear that the two are inverses.

Finally, the counit $\St(f^*U)\to U$ is an isomorphism because the log Stein factorization of $f^*U\to S$ is clearly the original $U\to S$, since $f^*U\to U$ is proper and log geometrically connected.
\end{proof}

\section{The homotopy exact sequence}

We are now ready to prove the exactness of the homotopy sequence for the logarithmic fundamental group.

\begin{theorem}\label{theorem:main}
Let $f\colon X\to S$ be a proper, finitely presented, log smooth, saturated, log geometrically connected morphism of connected fs log schemes over $k$, and let ${x}\to X$ be a log geometric point with image ${s}\to S$. Then there is an exact sequence
$$
\xymatrix{
\pi_1^\ket(X_{{s}},{x})\ar[r] & \pi_1^\ket(X,{x})\ar[r] & \pi_1^\ket(S,{s})\ar[r] & 1.
}
$$
\end{theorem}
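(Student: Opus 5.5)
We follow the Galois-category translation of exactness, as in \cite[\href{https://stacks.math.columbia.edu/tag/0BUM}{Tag 0BUM}]{stacks-project}. For a sequence $G'\to G\to G''\to 1$ of profinite groups arising from functors between Galois categories, there are three things to show:
\begin{itemize}
\item[(a)] surjectivity of $G\to G''$, which amounts to: the pullback $f^*\colon \fket(S)\to\fket(X)$ sends connected objects to connected objects;
\item[(b)] the composite $G'\to G''$ is trivial, which amounts to: for $U\in\fket(S)$, the restriction of $f^*U$ to $X_s$ is completely split;
\item[(c)] the image of $G'$ contains the kernel of $G\to G''$.
\end{itemize}
For (c) we use the standard criterion. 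Let $Y\in\fket(X)$ be connected and such that $Y\times_X X_s\to X_s$ has a section. We must show that $Y$ is the pullback of some $U\in\fket(S)$, or, in the alternative form, that $Y\times_X X_s$ contains a component mapping isomorphically to $X_s$ which forces $Y\cong f^*U$.

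Steps (a) and (b) are formal consequences of earlier results. For (a), Proposition \ref{prop:ff} applies, since $f$ is saturated and log smooth, hence flat and finitely presented, hence universally submersive, and it is log geometrically connected. So $f^*$ is fully faithful, and a fully faithful pullback preserves connectedness: $\Hom(f^*U,\emptyset)$ and idempotent decompositions are detected on $S$. Equivalently, if $U$ is connected Galois with group $G$, then $f^*U$ is Galois with group $G$. For (b), $f^*U\times_X X_s=U\times_S X_s=(U\times_S s)\times_s X_s$. Since $s$ is a log geometric point, $U\times_S s$ is a finite disjoint union of copies of $s$: fk\'et covers of a log geometric point are split, because $\Omega$ is uniquely divisible and the residue field is separably closed. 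Hence the restriction is completely split.

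The core step is (c), and it is where the log Stein factorization enters. Let $Y\to X$ be a connected fk\'et cover, and take its log Stein factorization $Y\to T=\St(Y)\to S$ from Proposition \ref{prop:stein.factorization}. The unit of the adjunction in Proposition \ref{prop:stein.properties} gives a map $Y\to f^*T$ over $X$. We claim it is an isomorphism whenever $Y|_{X_s}$ has a section over $X_s$. I would prove this by passing to root stacks. There $\cY\to\cT$ is proper, flat, with geometrically connected and reduced fibers, and $\cY\to f^*\cT=\cT\times_{\sqrt[n]{S}}\sqrt[n]{X}$ is a morphism of f\'et covers of $\sqrt[n]{X}$. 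Its degree is constant on the connected stack $\sqrt[n]{X}$, so it can be computed over the fiber of a single log geometric point $s$. By Remark \ref{rmk:cc.of.fibers}, together with the homeomorphism of log geometric fibers with root stack fibers, the points of $T$ over $s$ correspond to the connected components of $Y_s=Y\times_X X_s$. Therefore the degree of $f^*T\to X$ equals the number of connected components of $Y_s$.

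It remains to compare this with the degree of $Y$. By a Galois reduction, we may assume $Y$ is Galois with group $H$; then $Y_s$ is an $H$-torsor over the connected $X_s$. The existence of a section forces the stabilizer of a component to be trivial, so $Y_s$ is completely split, with $\deg Y$ components. Then $Y\to f^*T$ is a map of fk\'et covers of the same degree that is surjective (it is surjective on the fibers over $X_s$), hence an isomorphism, and $Y\cong f^*T$ as required. The step I expect to be the main obstacle is the identification of the fiber of $T$ over $s$ with $\pi_0(Y_s)$, which needs the non-fs log geometric fiber to be matched carefully with the stacky fiber. I would handle it through Corollary \ref{cor:connected.root} and the fact that the saturatedness of $\cY\to\cT$ makes log geometric fibers agree with scheme-theoretic fibers.
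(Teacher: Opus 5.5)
\textbf{There is a genuine gap in step (c).} Your handling of (a) and (b), and the set-up of (c), agree with the paper: the log Stein factorization $Y\to T=\St(Y)\to S$, the map $Y\to f^*T$, and the identification of the points of $T$ over $s$ with $\pi_0(Y_s)$.

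The gap is the sentence ``by a Galois reduction, we may assume $Y$ is Galois''. The hypothesis you must exploit does not pass to the Galois closure.
\begin{itemize}
\item Let $Y$ correspond to an open subgroup $U\subseteq\pi_1^\ket(X,x)$, and let $I$ be the image of $\pi_1^\ket(X_s,x)$. A section of $Y_s\to X_s$ means $U\supseteq I$.
\item The Galois closure of $Y$ corresponds to $\bigcap_g gUg^{-1}$. This contains $I$ only if $I$ fixes every point of the fiber, i.e.\ only if $Y_s\to X_s$ is already completely split. That is exactly what your degree count needs.
\item Proving the criterion only for Galois $Y$ shows that $\ker(\pi_1^\ket(X,x)\to\pi_1^\ket(S,s))$ is the closed normal subgroup generated by $I$, not $I$ itself. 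Normality of $I$ is part of what the theorem asserts.
\item For a non-Galois connected $Y$, comparing total degrees over $X$ really does fail. Nothing in your argument excludes $Y_s=X_s\sqcup Z$ with $Z\to X_s$ connected of degree $2$. In that case $\deg Y=3$, while $\deg f^*T=\#\pi_0(Y_s)=2$.
\end{itemize}

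\textbf{The missing idea} (this is how the paper concludes) is to compute the degree of the f\'et map $\cY\to\cT\times_{\sqrt[n]{S}}\sqrt[n]{X}$ itself, instead of comparing degrees over $\sqrt[n]{X}$.
\begin{itemize}
\item This degree is locally constant on the target. What matters is connectedness of the target $f^*\cT$, not of $\sqrt[n]{X}$ as you wrote.
\item The target is connected: $T$ is connected, and $f^*$ preserves connectedness by (a).
\item Let $z$ be the point of $T$ over $s$ corresponding to the component of $Y_s$ that is the image of the given section. The fiber of $Y\to f^*T$ over $(z,x)$ consists of the points of that component lying over $x$. Since that component maps isomorphically to $X_s$, this is exactly one point.
\item Hence the degree is $1$ and $Y\cong f^*T$, with no Galois assumption needed.
\end{itemize}

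Alternatively, you could keep your Galois argument and first prove that $I$ is normal. For Galois $\tilde Y$, use the action of $\mathrm{Aut}(\tilde Y/X)$ on $\St(\tilde Y)$: it acts transitively on the fiber over $s$, and automorphisms of a connected cover act freely, so the stabilizer of a component of $\tilde Y_s$ is normal. That argument is not in your proposal, however.
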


\begin{proof}
The morphisms in the sequence are of course the ones induced by the maps $X_{{s}}\to X$ and $X\to S$. Note also that the log geometric point $x$ of $X$ will lift uniquely to the log geometric fiber $X_s$. The proof of the theorem will follow along the lines of the one for the statement for schemes in \cite[\href{https://stacks.math.columbia.edu/tag/0BUM}{Tag 0BUM}]{stacks-project}.

Exactness on the right,  i.e. surjectivity of $\pi_1^\ket(X,{x})\to \pi_1^\ket(S,{s})$, follows directly from the fact that $f^*$ is fully faithful, thanks to Proposition \ref{prop:ff} (this alternatively also follows from the fact that $f^*$ is right adjoint to the Stein factorization functor and the counit of the adjunction is an isomorphism - see Proposition \ref{prop:stein.properties}), and  \cite[\href{https://stacks.math.columbia.edu/tag/0BN6}{Tag 0BN6}]{stacks-project}.

The fact that the composite $\pi_1^\ket(X_{{s}},{x})\to \pi_1^\ket(X,{x})\to \pi_1^\ket(S,{s})$ is trivial follows from the obvious fact that if $U\to S$ is a fk\'et cover, then the image in $\fket(X_{{s}})$ is a trivial covering (since $X_{{s}}\to S$ factors through ${s}\to S$, and $U$ becomes trivial on ${s}$, which is clear because $U$ will split over some k\'et cover of $S$, and $s$ will lift to said cover), and from \cite[\href{https://stacks.math.columbia.edu/tag/0BS8}{Tag 0BS8}]{stacks-project}.

Let us turn to exactness in the middle. Thanks to \cite[\href{https://stacks.math.columbia.edu/tag/0BS9}{Tag 0BS9}]{stacks-project} and \cite[\href{https://stacks.math.columbia.edu/tag/0BTS}{Tag 0BTS}]{stacks-project}, it suffices to prove that if a connected object $Y\to X$ of $\fket(X)$ is such that the pullback cover $Y\times_X X_{{s}}\to X_{{s}}$ has a section, then it is a trivial cover.

Consider the Stein factorization $\St(Y)$ of $Y\to X\to S$, and the induced morphism $Y\to f^*\St(Y)=\St(Y)\times_S X$, which is fk\'et, being a morphism between fk\'et log schemes over $X$. We will show that this is of degree 1, hence an isomorphism. It will follow then that $Y\times_X X_{{s}}\to X_{{s}}$ is the trivial cover. Indeed, we have $Y\times_X X_{{s}}\cong Y\times_S {s}=Y_{{s}}$ and $Y_s\to X_s$ can be identified with the projection $Y_s\cong \St(Y)_s\times_s X_s\to X_s$ via the cartesian square
$$\xymatrix{
Y_s\ar[r]\ar[d] & X_s\ar[d]\\
\St(Y)_s\ar[r] & s
}
$$
obtained by the one witnessing the isomorphism $Y\cong \St(Y)\times_S X$ by pulling back along $s\to S$. Finally note that $\St(Y)_s\to s$ is a trivial cover, because $s$ is a log geometric point.

Denote $\St(Y)$ by $T$. In order to show that the degree of $Y\to T\times_S X$ is 1, we can localize on $S$ and assume that it is quasi-compact, and therefore the Stein factorization is determined by the maps $\cY\to \cT\to \sqrt[n]{S}$ of root stacks.

Note that in the diagram
$$
\xymatrix{
\cY\ar[r]\ar[d] & \cT\times_{\sqrt[n]{S}} \sqrt[n]{X}\ar[d]\\
Y\ar[r] & T\times_S X
}
$$
the rightmost vertical map is a root stack thanks to Proposition \ref{prop:pullback}, and the upper horizontal map is f\'et (since both $\cY\to \sqrt[n]{X}$ and $\cT\times_{\sqrt[n]{S}} \sqrt[n]{X}\to \sqrt[n]{X}$ are f\'et). It suffices therefore to check that the f\'et morphism $\cY\to \cT\times_{\sqrt[n]{S}} \sqrt[n]{X}$ has degree 1.

This can be done as follows: first note that the geometric points of $\cT$ correspond to connected components of the geometric fibers of $\sqrt[n]{X}\to \sqrt[n]{S}$ (see Remark \ref{rmk:cc.of.fibers}). Since this morphism is saturated, the geometric fibers are the underlying stacks to the log geometric fibers, and if ${s}\to S$ is a log geometric point, lifting uniquely to ${s}\to \sqrt[n]{S}$ thanks to Lemma \ref{lemma:log.unique.lift}, then, as we show next, we have $(\sqrt[n]{X})_{{s}}\cong \sqrt[n]{X_{{s}}}$.

\begin{lemma}
With the assumptions of \ref{theorem:main}, there is a canonical isomorphism $(\sqrt[n]{X})_{{s}}\cong \sqrt[n]{X_{{s}}}$.
\end{lemma}

\begin{proof}
There is a natural map $\sqrt[n]{X_s}\to (\sqrt[n]{X})_s$, obtained from the universal property of the fibered product $(\sqrt[n]{X})_s=\sqrt[n]{X}\times_S s$, starting from $\sqrt[n]{X_s}\to \sqrt[n]{X}$ and $\sqrt[n]{X_s}\to X_s\to s$. It is easy to see that this is an isomorphism at the functorial level: it sends a morphism $T\to \sqrt[n]{X_s}$, corresponding to the morphism of schemes $T\to \underline{X_s}$ together with a DF structure $\frac{1}{n}\overline{M}_{X_s}|_T\to \Div_T$ lifting the pullback of the DF structure $ \overline{M}_{X_s}|_T\to \Div_T$ of $X_s$, to the morphism $T\to \sqrt[n]{X}$, corresponding to the induced morphism of schemes $ T\to \underline{X_s}\to \underline{X}$ together with the DF structure $\frac{1}{n}\overline{M}_X|_T\to \frac{1}{n}\overline{M}_{X_s}|_T\to \Div_T$. Since $T\to S$ factors through $s\to S$, this map will factor through $(\sqrt[n]{X})_s\to \sqrt[n]{X}$, giving the desired $T\to (\sqrt[n]{X})_s$.

To go in the other direction, start with a morphism $T\to (\sqrt[n]{X})_s$. The induced compatible morphisms of schemes $T\to \underline{X}$ and $T\to \underline{s}$ will uniquely determine a morphism $T\to \underline{X_s}$, and the fact that a lift $\frac{1}{n}\overline{M}_X|_T\to \Div_T$ of the pullback of the DF structure of $X$ will extend uniquely to a lift $\frac{1}{n}\overline{M}_{X_s}|_T \to \Div_T$ follows from noting that by construction (and saturatedness of $X\to S$) the sheaf $\frac{1}{n}\overline{M}_{X_s}|_T$ is the coproduct in the diagram
$$
\xymatrix{
\frac{1}{n}\overline{M}_S|_T\ar[r]\ar[d] & \frac{1}{n}\overline{M}_X|_T\ar[d] \\
\frac{1}{n}\overline{M}_s|_T\ar[r] & \frac{1}{n}\overline{M}_{X_s}|_T
}
$$
and, since $\overline{M}_s$ is uniquely divisible (recall that $s$ is a log geometric point), we have an inverse $\frac{1}{n}\overline{M}_s|_T\cong \overline{M}_s|_T$ to the inclusion $ \overline{M}_s|_T\to  \frac{1}{n}\overline{M}_s|_T$. Composing this with the given $ \overline{M}_s|_T\to \overline{M}_{X_s}|_T\stackrel{L}{\to} \Div_T$, from the universal property of the coproduct we obtain the desired DF structure $\frac{1}{n}\overline{M}_{X_s}|_T\to \Div_T$. This produces a $T$-point of $\sqrt[n]{X_s}$. Routine verifications show that these constructions are inverses.
\end{proof}

It follows now that the given section $X_{{s}}\to Y\times_X X_{{s}}$ induces a section $(\sqrt[n]{X})_{{s}}\to \cY\times_{\sqrt[n]{X}} (\sqrt[n]{X})_{{s}}$, because the diagram
$$
\xymatrix{
\cY\times_{\sqrt[n]{X}} (\sqrt[n]{X})_{{s}}\ar[r]\ar[d] &  (\sqrt[n]{X})_{{s}}\ar[d]\\
Y\times_X X_{{s}}\ar[r] & X_{{s}}
}
$$
is Cartesian: indeed, if $T$ is a log scheme equipped with morphisms $T\to Y\times_X X_{{s}}$ and $T\to (\sqrt[n]{X})_{{s}}$ such that the composites $T\to X_{{s}}$ agree, then in particular we obtain a morphism $T\to \sqrt[n]{X}$ (by composing with $(\sqrt[n]{X})_{{s}}\to \sqrt[n]{X}$) and a morphism $T\to Y$, and since $\cY\to \sqrt[n]{X}$ is strict, we also obtain an induced morphism $T\to \cY$ from the functorial description of $\cY$ as a root stack of $Y$. These data assemble to (uniquely) give an induced morphism $T\to \cY\times_{\sqrt[n]{X}} (\sqrt[n]{X})_{{s}}$.

Call $\cZ$ the connected component of $\cY\times_{\sqrt[n]{X}} (\sqrt[n]{X})_{{s}}\cong \cY\times_{\sqrt[n]{S}}{s}$ given by the image of this section, and ${z}$ the corresponding geometric point of $\cT$. Now if ${x}$ is any geometric point of $X_{{s}}$, the preimage in $\cY$ of the point of $\cT\times_{\sqrt[n]{S}} \sqrt[n]{X}$ corresponding to the pair $({z},{x})$ consists of exactly one point. Since $\cY\to \cT\times_{\sqrt[n]{S}} \sqrt[n]{X}$ is a finite \'etale morphism we can conclude that it has degree 1 (note that $T\times_S X$ is connected, and therefore $\cT\times_{\sqrt[n]{S}} \sqrt[n]{X}$ is connected as well, thanks to the surjectivity of $\pi_1^\ket(X,{x})\to \pi_1^\ket(S,{s})$ that we already proved \cite[\href{https://stacks.math.columbia.edu/tag/0BN6}{Tag 0BN6}]{stacks-project}). This concludes the proof of exactness in the middle, and therefore also the proof of the theorem.
\end{proof}

\begin{remark}
It is possible that one could give an alternative proof of Theorem \ref{theorem:main} by working directly with the infinite root stacks, since the log fundamental group is naturally identified with the \'etale fundamental group of the infinite root stack \cite[Theorem 6.4]{KNIRS}. Going down this path, one encounters some foundational issues, since infinite root stacks are only ``very weakly'' algebraic (they only have fpqc atlases), so we opted to use the inverse system and the root stacks at finite level.

\end{remark}

We now show that when $S$ is a log point, the sequence is actually short exact (and less assumptions are required on the morphism $X\to S$). This of course is morally due to the fact that log points are $K(\pi,1)$ spaces (over $\bC$, their ``analytification'' - i.e. the Kato--Nakayama space \cite{KN} - is a real torus of finite dimension).

\begin{theorem}\label{thm:left.exact}
Let $f\colon X\to S$ be a quasi-compact, log geometrically connected morphism of connected fs log schemes over $k$, where $S$ is a log point, and let ${x}\to X$ be a log geometric point with image ${s}\to S$. Then there is an exact sequence
$$
\xymatrix{
1\ar[r] &\pi_1^\ket(X_{{s}},{x})\ar[r] & \pi_1^\ket(X,{x})\ar[r] & \pi_1^\ket(S,{s})\ar[r] & 1.
}
$$
\end{theorem}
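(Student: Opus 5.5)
Since $S$ is a log point, $\underline{S}=\Spec L$ for a field $L$ and $M_S=L^\times\oplus P$ for a sharp fs monoid $P$. The plan is to exhibit $X_s\to X$ as a profinite limit of finite Kummer étale Galois covers of $X$, pulled back from $S$. Left exactness then becomes a statement about covers of $X_s$ descending along this tower.

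\textbf{Step 1: the tower over $S$.} Write the strict log geometric point $s$ as a limit $s=\varprojlim_i S_i$ of connected fk\'et Galois covers $S_i\to S$. Explicitly, take the separable closure $L^{\mathrm{sep}}$ together with the Kummer extensions $P\to\frac1n P$ for $\chara k\nmid n$. Each $S_i$ is again a log point, and $\pi_1^\ket(S,s)=\varprojlim_i \mathrm{Aut}(S_i/S)$. Put $X_i=X\times_S S_i$.

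\textbf{Step 2: connectedness of the levels.} Since $s$ is strict, the fiber $X_s=\varprojlim_i X_i$ is connected by hypothesis. Each $X_i$ receives a surjection from $X_s$, because $X_s\to X_i$ is a limit of k\'et covers. Hence each $X_i$ is connected. One subtlety: if $X\to S$ is not saturated, the fibered products are saturated ones, and I need them compatible with the limit. I would handle this with \cite[Lemma 3.1.9]{beyondfs} and the fact that fibered products commute with the inverse limit. Quasi-compactness of $f$ enters here.

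\textbf{Step 3: the first term of the sequence.} Each $X_i\to X$ is a connected Galois fk\'et cover with group $G_i=\mathrm{Aut}(S_i/S)$, being pulled back from $S_i\to S$. Note that Proposition~\ref{prop:ff} cannot be used, since it needs saturatedness. Instead, the $G_i$-action on $X_i$ is free, and $X_i$ is connected. Therefore $\pi_1^\ket(X_i,x)\subset\pi_1^\ket(X,x)$ is an open normal subgroup, and $\pi_1^\ket(X)\to G_i$ is surjective with kernel exactly $\pi_1^\ket(X_i)$.

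\textbf{Step 4: passing to the limit.} Taking the limit over $i$ gives
\[
\ker\bigl(\pi_1^\ket(X,x)\to\pi_1^\ket(S,s)\bigr)=\bigcap_i \pi_1^\ket(X_i,x),
\]
together with surjectivity on the right. It remains to identify $\bigcap_i\pi_1^\ket(X_i,x)$ with $\pi_1^\ket(X_s,x)$. Equivalently, I need $\fket(X_s)\simeq\varinjlim_i\fket(X_i)$. This is a limit theorem for finite Kummer étale covers along the filtered limit $X_s=\varprojlim X_i$, which has affine transition maps and quasi-compact $X_i$.

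\textbf{Step 5: the limit theorem (main obstacle).} I expect Step 5 to be the hard part. I would obtain it in two moves. First, use the sfp finiteness of fk\'et covers from \cite{beyondfs}, so that a cover is defined at a finite stage. Second, apply standard limit arguments \cite{rydh} for finitely presented log étale morphisms, whose charts $P\to Q$ are finitely presented, so that both objects and morphisms descend.

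\textbf{Conclusion.} Step 5 implies that $\pi_1^\ket(X_s,x)\to\pi_1^\ket(X,x)$ is injective with image $\bigcap_i\pi_1^\ket(X_i,x)$. Injectivity holds because every connected fk\'et cover of $X_s$ comes from some $X_i$, which is a cover of $X$ via $X_i\to X$. The image statement follows from Step 4. Together these give exactness on the left and in the middle, proving Theorem~\ref{thm:left.exact}.
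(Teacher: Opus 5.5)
Your route is the paper's. You pull back to $X$ the tower of Galois fk\'et covers $S_i\to S$ (Kummer roots of $P$ together with finite separable extensions of the residue field). The Galois correspondence then gives $1\to\pi_1^\ket(X_i)\to\pi_1^\ket(X)\to G_i\to 1$ at each level. Finally you pass to the limit using a limit theorem $\pi_1^\ket(X_s)\cong\varprojlim_i\pi_1^\ket(X_i)$; the paper simply asserts this ``as in SGA1 IX 6'', and you at least sketch it.

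Your Step~3 is more careful than the paper. The paper gets the automorphism group of $X_i\to X$ from Proposition~\ref{prop:ff}, whose hypotheses (saturated, universally submersive) are not among those of Theorem~\ref{thm:left.exact}. Your argument needs only log geometric connectedness, which is what the statement assumes: $X_i\to X$ is a $G_i$-torsor by base change, and $X_i$ is connected because $X_s$ surjects onto it.

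\textbf{The error in Step~1.} The paper handles this point explicitly: fk\'et covers of a log point are not log points when $P\neq 0$.
\begin{itemize}
\item The cover attached to $P\to\frac{1}{n}P$ has underlying scheme $\Spec L[\frac{1}{n}P]/(P\setminus\{0\})$, which is non-reduced. For $P=\bN$ it is $\Spec L[u]/(u^n)$.
\item Consequently $\varprojlim_i S_i=\Spec\bigl(P_\bQ\to L^{\mathrm{sep}}[P_\bQ]/(P\setminus\{0\})\bigr)$ is a nil-thickening of $s$, not $s$ itself.
\item Hence $\varprojlim_i X_i$ is only a nil-thickening of $X_s$. A limit theorem as in your Step~5 identifies $\varinjlim_i\fket(X_i)$ with $\fket$ of that thickening, not with $\fket(X_s)$.
\end{itemize}

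The missing ingredient is topological invariance of the k\'et site (\cite[Theorem 2.8]{illusie-survey}, \cite[Proposition 4.1.7]{beyondfs}). The paper uses it as follows:
\begin{itemize}
\item it replaces each $S_i$ by its reduction $S_i^{\mathrm{red}}$, which is a genuine log point, though no longer k\'et over $S$;
\item it uses $\pi_1^\ket(X\times_S S_i^{\mathrm{red}})\cong\pi_1^\ket(X_i)$;
\item it applies the limit argument to $X_s=\varprojlim_i X\times_S S_i^{\mathrm{red}}$.
\end{itemize}
Your Step~2 is unaffected, since it is purely topological.

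\textbf{Choice of $s$.} You tacitly take $s$ to be the strict point with residue field $L^{\mathrm{sep}}$, while the statement allows any log geometric point. The paper at least states a reduction to this case. Such a reduction is not a mere change of base point, because $X_s$ itself depends on $s$.
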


\begin{proof}
We will adapt the proof of \cite[IX 6]{SGA1} for the ``fundamental short exact sequence'', that relates the \'etale fundamental groups of a scheme over a field with the one of the base-change to a separable closure and the absolute Galois group of the base field.

Write $S=\Spec(P\to k)$, where $P$ is a sharp fs monoid, and let ${s}$ be the strict log geometric point ${s}\to S$, where ${s}=\Spec(P_\bQ \to k)$. This might be different than the $s$ in the statement, but by independence of the fundamental group from the base point (see \cite[Theorem 1.7 and Remark 1.8]{Olsson}), we reduce to proving that the sequence
$$
\xymatrix{
1\ar[r] & \pi_1^\ket(X_{{s}},{x})\ar[r] & \pi_1^\ket(X,{x})\ar[r] & \pi_1^\ket(S,{s})\ar[r] & 1
}
$$
for this strict log geometric point $s\to S$ is exact.

Let $L/k$ be a finite Galois extension of $k$, and $n\in \mathbb{N}$. Denote by $S^L_n$ be the k\'et cover $$S_n^L=\Spec\left(\frac{1}{n}P\to L\left[\frac{1}{n}P\right]/(P)\right)\to S,$$ and by $\stackrel{\circ}{S_n^L}$ its reduction with the induced log structure (which is a log point, since all elements of $\frac{1}{n}P$ are nilpotent). Note that ${s}\cong \varprojlim_{n,L} \stackrel{\circ}{S^L_n}$ as log schemes, and therefore $X_{{s}}\cong \varprojlim_{n,L} \stackrel{\circ}{X_n^L}$, where we set $X_n^L=X\times_S S_n^L$ and $\stackrel{\circ}{X_n^L}=X\times_S \stackrel{\circ}{S_n^L}$. The inverse system is indexed by pairs $(n,L)$ of a  natural number and a finite Galois extension of $k$, and there is an arrow $(n,L)\to (m,K)$ exactly when $n\mid m$ and  $L\subseteq K$. It follows, as in \cite[IX 6]{SGA1}, that
$$
\pi_1^\ket(X_{{s}},{x})\cong \varprojlim_{n,L} \pi_1^\ket(\stackrel{\circ}{X_n^L}, x_n^L)
$$
via the natural map, and where $x_n^L$ is the image of ${x}$ in $\stackrel{\circ}{X_n^L}$.

Now by topological invariance of the k\'et site \cite[Theorem 2.8]{illusie-survey} (see also \cite[Proposition 4.1.7]{beyondfs}), we have isomorphisms $$ \pi_1^\ket(\stackrel{\circ}{X_n^L}, x_n^L)\cong \pi_1^\ket(X_n^L, x_n^L),$$ which are clearly compatible with the various projection maps $X_m^L\to X_n^L$ for $n\mid m$.

Now note that $\mu_n(P)$, the Cartier dual of the cokernel of the map $P^\gp\to \frac{1}{n}P^\gp$, and the Galois group $G_L$ of $L/k$ act on the Galois cover $S_n^L$, and the quotient is $S$. The actions clearly commute, so it follows that the automorphism group of $S_n$ over $S$ is $\mu_n(P)(k)\times G_L$, and $\pi_1^\ket(S,s)\cong \varprojlim_{n,L} \mu_n(P)(k)\times G_L $. Now the pullback $X_n^L$ is also a Galois cover of $X$, and thanks to Proposition \ref{prop:ff}, we have that its automorphism group is again $\mu_n(P)(k)\times G_L$.

From \cite[V 6.13]{SGA1} we obtain then an exact sequence
$$
\xymatrix{
1\ar[r] & \pi_1^\ket(X_n^L,x_n^L)\ar[r] & \pi_1^\ket(X,{x})\ar[r] & \mu_n(P)(k)\times G_L \ar[r] & 1
}
$$
for each $n\in \bN$ and finite Galois extension $L/k$. These exact sequences are compatible in the obvious sense, and by taking the inverse limit over $(n,L)$  we obtain the short exact sequence in the statement.
\end{proof}

\begin{remark}

If $S$ is a log point, the group $\pi^\ket_1(X_s,x)\cong \ker (\pi^\ket_1(X,x)\to \pi_1^\ket(S,s))$ is in some regards the correct fundamental group of the log scheme ``$X$ relative to $S$''. The point is that full log fundamental group $\pi_1^\ket(X,x)$ also sees nontrivial loops in $S$, and is therefore too large for some purposes, for example when one has to deal with the fundamental group of fibers of a family.

If $k=\bC$, this can be visualized (at least with the assumptions of Theorem \ref{theorem:main}) by looking at the induced morphism of Kato--Nakayama spaces \cite{KN}. In the case of a morphism $X\to S$ as in Theorem \ref{theorem:main}, the space $S_\log$ is a real torus of dimension equal to the rank of $\overline{M}_S^\gp$, and $X_\log\to S_\log$ is a fiber bundle (thanks to \cite[Theorem 0.3]{nakayama-ogus}). Considering fundamental groups, there are going to be loops in $X_\log$ that map to non-trivial loops of $S_\log$. Passing to the kernel of the map of fundamental groups corresponds to restricting to a fiber, thanks to the exact sequence of a fibration in topology and the fact that real tori are $K(\pi,1)$ spaces.

Moreover, in line with comparison theorems between the log \'etale fundamental group and the topological fundamental group of the Kato--Nakayama space (see for example \cite[Theorem 6.4]{KNIRS}), we can show that the profinite completion of the kernel of $\pi_1(X_\log,x')\to \pi_1(S_\log, s')$ is isomorphic to the group $\pi_1^\ket(X_s,x)$ for log geometric points $s\to S$ and $x\to X_s$ and point $s'\in S_\log$ and $x'\in X_\log$ mapping to the images of $s$ and $x$ in $S_\an$ and $X_\an$ respectively.

Starting from the exact sequence
$$
\xymatrix{
1\ar[r] & \pi_1((X_\log)_{s'},{x'})\ar[r]  & \pi_1(X_\log,x')\ar[r] & \pi_1(S_\log,s')\ar[r] & 1,
}
$$
we have that the induced sequence of profinite completions
$$
\xymatrix{
1\ar[r] & \widehat{\pi}_1((X_\log)_{s'},{x'})\ar[r]  & \widehat{\pi}_1(X_\log,x')\ar[r] & \widehat{\pi}_1(S_\log,s')\ar[r] & 1
}
$$
is still exact, thanks to Proposition 4 of \cite{profinite}: this uses the fact that all fundamental groups are finitely generated (which follows from standard results about fundamental groups of triangulable spaces) and therefore are ``type F'', and Proposition 1 of the same paper, ensuring that the action $\pi_1(X_\log,x')\to \mathrm{Aut}(\widehat{\pi}_1((X_\log)_{s'},{x'}))$ induced by conjugation factors through the profinite completion $\widehat{\pi}_1(X_\log,x')$. Now it suffices to note that we have compatible isomorphisms $\widehat{\pi}_1(X_\log,x')\cong \pi_1^\ket(X,x)$ and $ \widehat{\pi}_1(S_\log,s')\cong \pi_1^\ket(S,s)$ (thanks to \cite[Theorem 6.4]{KNIRS}), inducing then an isomorphism $\widehat{\pi}_1((X_\log)_{s'},{x'})\cong \pi_1^\ket(X_s,x)$ as claimed.
\end{remark}

\begin{remark}[K\"{u}nneth formula for the log fundamental group]
As in the case of schemes, from Theorem \ref{theorem:main} one can easily deduce a K\"{u}nneth type formula for the log fundamental group (see also \cite[Proposition 2.4]{hoshi}).

Let $X$ and $Y$ be connected fs log schemes over an algebraically closed field $k$, and assume that $X$ is proper, finitely presented and log smooth over $k$ (here $\Spec k$ is equipped with the trivial log structure). Let $x\to X$, $y\to Y$ be strict log geometric points with residue field $k$. Note that we can choose a log geometric point $z\to X\times_k Y$ mapping to $x$ and $y$ by taking a log geometric point of the fibered product $x\times_k y$. The projections $X\times_k Y\to X$ and $X\times_k Y\to Y$ induce a homomorphism $\pi_1^\ket(X\times_k Y,z)\to \pi_1^\ket(X,x)\times \pi_1^\ket(Y,y)$.

This is an isomorphism, as one can easily show by applying Theorem \ref{theorem:main} to the projection $X\times_k Y\to Y$, and comparing the resulting short exact sequence with the split sequence 
$$
\xymatrix{
1\ar[r] & \pi_1^\ket(X,{x})\ar[r] & \pi_1^\ket(X,x)\times \pi_1^\ket(Y,y)\ar[r] & \pi_1^\ket(Y,{y})\ar[r] & 1,
}
$$
taking into account the fact that the homomorphism $\pi_1^\ket(X_y,z)\cong \pi_1^\ket(X,x)$ induced by the map $X_y\to X$ is an isomorphism (here $X_y$ is the fiber of $X\times_k Y\to Y$ over $y\to Y$, which of course coincides with the fiber of $X\to \Spec k$ over $y\to \Spec k$). This follows for example by applying Theorem \ref{thm:left.exact} to the morphism $X\to \Spec k$ with respect to the log geometric point $y\to \Spec k$.
\end{remark}

\bibliographystyle{alpha}
\bibliography{biblio}

\end{document}